\def\vep{\varepsilon}
\newtheorem{lem}{Lemma}[section]
\newtheorem{dfn}[lem]{Definition}
\newtheorem{pro}[lem]{Proposition}
\newtheorem{thm}[lem]{Theorem}
\newtheorem{cor}[lem]{Corollary}
\theoremstyle{remark}
\DeclareMathOperator\card{Card}
\numberwithin{equation}{section}
\begin{document}
\title[The multifractal spectra of planar waiting sets in beta expansions]{The multifractal spectra of planar waiting sets in beta expansions}
\author{Haibo Chen}
 \address{School of Statistics and Mathematics, Zhongnan University of Economics and Law, Wu\-han, Hubei, 430073, China}
  \email{hiboo\_chen@sohu.com}

\begin{abstract}
Let $\beta>1$ be a real number. In this paper, the Hausdorff dimension of sets consisting of pairs
of numbers with prescribed quantitative waiting time indicators in $\beta$-expansions are determined. More precisely, let $I$ be the unit interval $[0,1)$ and write $\underline{R}^\beta(x,y)$ and 
$\overline{R}^\beta(x,y)$ as the lower and upper quantitative waiting time
indicators of $y$ by $x$ in $\beta$-expansions, respectively. Define the waiting set on the plane by
\[E_\beta(a,b)=\left\{(x,y)\in I^2\colon\underline{R}^\beta(x,y)=a,\overline{R}^\beta(x,y)=b\right\}.\]
where $0\leq a\leq b\leq\infty$, then the set 
$E_\beta(a,b)$ is always of Hausdorff dimension two for any pair of numbers $a$ and $b$. In addition, some generalizations for this result are also given in the last section.
\end{abstract}

\subjclass[2010]{Primary 11K55; Secondary 28A80.}
\keywords{$\beta$-expansion, Hausdorff dimension, cut set, quantitative waiting time indicators.}
\maketitle

\section{Introduction}
The $\beta$-expansion was introduced by R\'{e}nyi (see \cite{R}) as a generalization of the ordinary $m$-adic expansion, where $m\geq2$ be an integer. To begin with, we first introduce the definition and some notations of $\beta$-expansion. Let $I=[0,1)$. For $\beta>1$, define the \emph{$\beta$-transformation} $T_\beta\colon I\to I$ by
\[T_\beta(x)=\beta x-[\beta x],\quad x\in I.\] 
Here, $[\xi]$ is the integer part of number $\xi$. Under this transformation, each $x\in I$ can be uniquely expanded into a finite or an infinite series as
\begin{align}\label{formula beta 1}
	x=\frac{\vep_1(x,\beta)}{\beta}+\frac{\vep_2(x,\beta)}{\beta^2}+\cdots+\frac{\vep_n(x,\beta)}{\beta^n}+\cdots,
\end{align}
where $\vep_n(x,\beta)=\big[\beta T_\beta^{n-1}(x)\big]$, $n\geq1$, is called the $n$-th digit of $x$ with respect to base $\beta$ and $\vep(x,\beta)=\big(\vep_1(x,\beta),\vep_2(x,\beta),\ldots\big)$ is called the \emph{digit sequence} of $x$. Sometimes, we can also identify $x$ with its digit sequence and write 
\begin{align}\label{formula beta 2}
	x=\big(\vep_1(x,\beta),\vep_2(x,\beta),\ldots\big) 
\end{align}
for simplicity. The formulas \eqref{formula beta 1} and \eqref{formula beta 2} are both called the $\beta$-expansion of $x$.

Denote by $\Sigma=\{0,1,\ldots,\lceil\beta-1\rceil\}$ the alphabet with $\lceil\beta\rceil$ elements, where $\lceil\xi\rceil$ is the least integer greater or equal to $\xi$. Then every $n$-th digit $\vep_n(x,\beta)$ of $x$ belongs to $\Sigma$. Moreover, denote by $\Sigma^n$, $n\geq1$, the set of finite sequences of length $n$ and $\Sigma^\infty$ the set of infinite sequences of which each digit is in $\Sigma$. It is worth noting that for a given sequence in $\Sigma^\infty$ it might not be the $\beta$-expansion of some $x\in I$. There are two operators in relation to $\Sigma^\infty$. One is the shift operator $\sigma$ on $\Sigma^\infty$ given by
\[\sigma(\vep_1,\vep_2,\vep_3,\ldots)=(\vep_2,\vep_3,\vep_4,\ldots)\]
for any $(\vep_1,\vep_2,\vep_3,\ldots)\in\Sigma^\infty$.
The other is the metric $d$ on $\Sigma^\infty\times\Sigma^\infty$, which is defined as
\begin{align}
	d(\vep,\eta)=\beta^{-\min\{i\geq1\colon\vep_i\neq\eta_i\}} 
\end{align}
where $\vep=(\vep_1,\vep_2,\ldots)\in\Sigma^\infty$ and $\eta=(\eta_1,\eta_2,\ldots)\in\Sigma^\infty$. Endowed with this metric, the space $\Sigma^\infty$ is compact and $\sigma$ is continuous on $\Sigma^\infty$.

Let $n\geq1$. We call a word $(\vep_1,\vep_2,\ldots,\vep_n)$ or a sequence $(\vep_1,\vep_2,\ldots,\vep_n,\ldots)$ $\beta$-admissible if there exists an $x\in I$ such that the $\beta$-expansion of $x$ begins with it. Here and in the sequel, we call a finite sequence a word and an infinite sequence a sequence for the sake of distinction. Denote by $\Sigma_\beta^n$, $n\geq1$, the set of all $\beta$-admissible words of length $n$ and $\Sigma_\beta$ the set of all admissible sequences, that is,
\[\Sigma_\beta=\{\vep(x,\beta)\colon x\in I\}.\] 
Let $S_\beta$ be the closure of $\Sigma_\beta$ under the product topology on $\Sigma^\infty$. Then $(S_\beta,\sigma|_{S_\beta})$ is a subshift of the symbolic space $(\Sigma^\infty,\sigma)$ and the two systems $(S_\beta,\sigma|_{S_\beta})$ and $(I,T_\beta)$ are metrically isomorphism. 

For an admissible word $(\vep_1,\ldots,\vep_n)$, $n\geq1$, we call 
\[I_n(\vep_1,\ldots,\vep_n)=\{x\in I\colon \vep_i(x,\beta)=\vep_i, 1\leq i\leq n\}\]
an $n$-th order \emph{basic interval}. Actually, the $n$-th order basic interval is a left-open and right-closed interval with $\vep_1/\beta+\cdots+\vep_n/\beta^n$ as its left endpoint (see Lemma 2.3 in \cite{FW}). Moreover, write $I_n(x)=I_n\big(\vep_1(x,\beta),\ldots,\vep_n(x,\beta)\big)$ for the $n$-th order basic interval containing $x$ and $|I_n(x)|$ for the length of $I_n(x)$.

Now, we would like to introduce the definition of quantitative waiting time indicator, for more one can refer to~\cite{CT,G1,WZ} and the references therein. Let $\beta>1$, $n\geq1$ and $x,y\in I$. We call
\[W_n^\beta(x,y)=\inf\left\{k\geq1\colon T_\beta^kx\in I_n(y)\right\}\]
\emph{the waiting time} of $I_n(y)$ by $x$ in $\beta$-expansion. That is, $W_n^\beta(x,y)$ is the first entrance time of the orbit of $x$ under $T_\beta$ into the basic interval $I_n(y)$. Moreover, we can also write it as
\[W_n^\beta(x,y)=\inf\left\{k\geq1\colon \big(\vep_k(x,\beta),\ldots,\vep_{k+n-1}(x,\beta)\big)=\big(\vep_1(y,\beta),\ldots,\vep_n(y,\beta)\big)\right\}.\] Upon it,
\emph{the lower and upper quantitative waiting time indicators} of $y$ by $x$ in $\beta$-expansions are defined respectively as
	\begin{align}
	    \underline{R}^\beta(x,y)=\liminf_{n\to\infty}
        \frac{\log W_n^\beta(x,y)}{n}\quad\mbox{and}\quad \overline{R}^\beta(x,y)=\limsup_{n\to\infty}
        \frac{\log W_n^\beta(x,y)}{n}.
	\end{align}
If the two indicators are common, then we denote the value by $R^\beta(x,y)$ and call it \emph{the quantitative waiting time indicator} of $y$ by $x$. In addition, if $W_n^\beta(x,y)$ is infinite for some pair of $x$ and $y$, then $\underline{R}^\beta(x,y)$ and $\overline{R}^\beta(x,y)$ are set to be equal to infinity.

From the above definitions, we can see that the quantitative waiting time indicator gives a quantitative description to the speed at which a given trajectory of one point enters into the neighborhood of another point.

In fact, the quantitative waiting time indicator can be regarded as an generalization of the quantitative recurrence time indicator, which is studied firstly by
L.~Barreira and B.~Saussol in~\cite{BS}.  For the development of quantitative recurrence time indicator and its application in the continued fraction dynamical system, L\"{u}roth dynamical system, etc, one can refer to some noted work such as \cite{FengWu}, \cite{OW}, \cite{P} and \cite{PTW}.

In recent years, the quantitative waiting time indicator has aroused the interest of many people. Compared with the quantitative recurrence time indicator, by using it we can give more precise analysis and obtain more information on the theory of recurrence. To illustrate this point, we would like to introduce a significant work, given by Marton and
Shields~\cite{MS}, which proved that for a weak Bernoulli process the quantitative waiting time indicator
equals to the entropy almost surely with respect to the product
measure. Since Bernoulli shifts are also weak Bernoulli and $\beta$-shifts is Bernoulli (see \cite{IMT}), we may obtain as a special case that
\begin{equation}\label{R(x,y)}
R^\beta(x,y)=\log\beta,\quad\mbox{a.e.}\ (x, y)\in I^2 
\end{equation}
with respect to the product measure $\mu_\beta\times\mu_\beta$. Here, $\log\beta$ is the entropy of dynamical system $(I,T_\beta)$, $\mu_\beta$ is the unique invariant measure which is equivalent to Lebesgue measure and $T_\beta$ is ergodic with respect to $\mu_\beta$ (see \cite{Pa,R}). Note that the quantitative waiting time indicator does not convergent in general ergodic processes according to a counter example provided by Shields~\cite{Shields}. But for some special situations there are still some expectable results in the topics such as the continued fraction
transformation, the interval exchange map and the irrational rotation on the unit interval, one can see \cite{DaKa,KM,KS} and the references therein for more comprehension. 

In this paper, we would like to complement \eqref{R(x,y)} by investigating the following planar sets in which the lower and upper quantitative waiting time indicators take prescribed values.  
Let $a$ and $b$ be two real numbers satisfying $0\leq a\leq b\leq\infty$. Define the planar waiting set of two parameters 
\[E_\beta(a,b)=\left\{(x,y)\in I^2\colon\underline{R}^\beta(x,y)=a,\overline{R}^\beta(x,y)=b\right\}.\]
Denote by $\dim_H$ the Hausdorff dimension of a set. Here, we recommend Falconer's book \cite{F97} for one to consult the definition and basic properties of Hausdorff dimension. Then, for the size of $E_\beta(a,b)$ we have  

\begin{thm}\label{theorem main theorem}
	Let $\beta>1$. For any $0\leq a\leq b\leq\infty$, we have $\dim_HE_\beta(a,b)=2$. 
\end{thm}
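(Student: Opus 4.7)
The upper bound $\dim_H E_\beta(a,b)\le 2$ is immediate from $E_\beta(a,b)\subseteq I^2$, so the task is to establish the matching lower bound. The plan is to slice: show that for Lebesgue-almost every $y\in I$ the section
\[E_\beta(a,b)_y:=\big\{x\in I\colon(x,y)\in E_\beta(a,b)\big\}\]
has Hausdorff dimension one, and then invoke Marstrand's slicing inequality
\[\mathcal{H}^{s+1}(E)\ge c\int^{*}\mathcal{H}^{s}(E_y)\,dy\qquad(E\subseteq\mathbb{R}^2\ \text{Borel})\]
with $s<1$ arbitrarily close to one, which forces $\dim_H E_\beta(a,b)\ge 2$.

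To build the section for a generic $y$, fix $y$ whose digit sequence $(\eta_1,\eta_2,\ldots)$ is \emph{normal} in the sense that every $\beta$-admissible word occurs in it as a factor; this holds for a.e.\ $y$ because $T_\beta$ is ergodic with respect to $\mu_\beta\sim\mathrm{Lebesgue}$. Pick an increasing sequence $\{n_k\}\subset\N$ with $n_{k+1}/n_k\to\infty$, partitioned into two subsequences $\{n_k'\}$ and $\{n_k''\}$ targeted at realizing $\underline{R}^\beta=a$ and $\overline{R}^\beta=b$ respectively, and construct a Cantor subset $F(y)\subseteq I$ of admissible $x$'s block-by-block. Outside a sparse family of \emph{constrained} blocks the digits of $x$ would be chosen freely from a sub-alphabet $\{0,1,\ldots,M\}$ with $M$ just below $\lceil\beta-1\rceil$ (guaranteeing $\beta$-admissibility via the classical Parry characterization and contributing dimension $\log(M+1)/\log\beta$ arbitrarily close to one as $M$ grows). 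On the $k$-th constrained block, I would force the prefix $(\eta_1,\ldots,\eta_{n_k'})$ or $(\eta_1,\ldots,\eta_{n_k''})$ to appear as a \emph{first} occurrence beginning at position $\approx\lfloor\beta^{an_k'}\rfloor$ or $\lfloor\beta^{bn_k''}\rfloor$, preceded by a reserved separator digit that precludes any earlier accidental match. Since the $\{n_k\}$ grow super-exponentially, the proportion of constrained positions in $\N$ is $o(1)$, so a uniform mass distribution on $F(y)$ has local scaling $|I_n(x)|^{1-o(1)}$, and the mass-distribution principle gives $\dim_H F(y)=1$ after letting $M\uparrow\lceil\beta-1\rceil$.

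The boundary cases need only small tweaks: $a=0$ by merely bounding the insertion positions; $b=\infty$ by switching from ``insert the prefix'' to ``avoid the prefix on a super-exponentially long stretch'', a soft constraint on the free digits that is compatible with full dimension; and $a=b$ by working with a single sequence $\{n_k\}$. Then $\underline{R}^\beta(x,y)=a$ and $\overline{R}^\beta(x,y)=b$ for $x\in F(y)$ can be read off directly from the prescribed first-entrance positions and the monotonicity $W_{n+1}^\beta\ge W_n^\beta$. The main obstacle, and where most of the bookkeeping lies, is the combinatorial design of the constrained blocks: one must ensure that the target prefix, placed at the prescribed position, is genuinely its \emph{first} occurrence in $\vep(x,\beta)$, with no spontaneous earlier match arising from the free digits, and all of this without sacrificing Hausdorff dimension. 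The separator-digit device trades a little alphabet for this guarantee, and the limit over sub-alphabets restores dimension one on the section; performing this construction uniformly over a positive-Lebesgue-measure family of normal $y$'s is what ultimately activates the slicing inequality and delivers $\dim_H E_\beta(a,b)=2$.
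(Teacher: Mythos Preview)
Your overall architecture---slice by $y$, build a one-dimensional Cantor set inside each section, then integrate over a one-dimensional family of $y$'s---is exactly the paper's route (the paper cites Falconer's slicing inequality, Corollary~7.12 of \cite{F97}, in place of Marstrand). The decisive gap is in your free-digit mechanism. You assert that choosing the free digits from $\{0,1,\dots,M\}$ produces a Cantor set of dimension $\log(M{+}1)/\log\beta$ that tends to $1$ ``as $M$ grows''. But $M$ is an integer strictly below the fixed number $\lceil\beta-1\rceil$, so it cannot grow: for $1<\beta\le 2$ the only proper sub-alphabet is $\{0\}$, yielding dimension zero, and for any non-integer $\beta\in(n,n{+}1)$ the best you obtain is $\log n/\log\beta$, a fixed number strictly below $1$. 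A single-digit sub-alphabet therefore cannot deliver full-dimensional sections, and the slicing step then gives nothing close to $2$.

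The paper repairs this with a block-level device. For $\beta$ in the dense class $A_0$ it takes \emph{all} admissible words of length $N$, pads each with a fixed tail $0^p$ so that the resulting words are freely concatenable (full basic intervals), and obtains a homogeneous Moran set of dimension at least $N/(N{+}p)\to 1$ via R\'enyi's count $\card\Sigma_\beta^N\ge\beta^N$. The padding simultaneously replaces your separator digit: the $y$'s are taken not from a Lebesgue-generic set but from a structured family $\mathcal V_{n_0,m_0}^\beta$ whose longest zero-run has length $<n_0+m_0$, and by choosing $p\ge n_0+m_0$ the long $0$-runs in the free blocks of $x$ mechanically forbid any premature occurrence of $y$'s prefix. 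General $\beta>1$ is then reached by approximation through $A_0$ (Schmeling's method), which stands in for your attempt to handle arbitrary $\beta$ directly. Your first-occurrence bookkeeping and boundary-case sketch are reasonable in spirit, but they rest on a section construction that, as written, does not achieve dimension one.
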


In fact, Theorem~\ref{theorem main theorem} also generalizes simultaneously the result in \cite{CT} from $m$-adic expansion (integer $m\geq2$) to $\beta$-expansion. However, we will apply different technique and approach in its proof. Particularly, let $0\leq a\leq\infty$ and define the level set
\[E_\beta(a)=\left\{(x,y)\in I^2\colon R^\beta(x,y)=a\right\}.\] 
Then Theorem~\ref{theorem main theorem} yields immediately  
\begin{cor} 
	Let $\beta>1$. For any $0\leq a\leq\infty$, we have 
	$\dim_HE_\beta(a)=2$. 
\end{cor}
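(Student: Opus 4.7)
The plan is to derive the corollary as an immediate specialization of Theorem~\ref{theorem main theorem} at the diagonal $b=a$, so the proof is essentially a matter of unpacking definitions. First I would make precise the identification $E_\beta(a)=E_\beta(a,a)$ as subsets of $I^2$. By the conventions fixed in the introduction, the quantity $R^\beta(x,y)$ is only defined when the two indicators $\underline{R}^\beta(x,y)$ and $\overline{R}^\beta(x,y)$ coincide, in which case $R^\beta(x,y)$ denotes their common value; and if $W_n^\beta(x,y)=\infty$ for some $n$, then both $\underline{R}^\beta(x,y)$ and $\overline{R}^\beta(x,y)$ are set to $\infty$ simultaneously. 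Consequently the condition $R^\beta(x,y)=a$ is equivalent to $\underline{R}^\beta(x,y)=\overline{R}^\beta(x,y)=a$, uniformly in $a\in[0,\infty]$, which gives
\[E_\beta(a)=\left\{(x,y)\in I^2\colon\underline{R}^\beta(x,y)=a,\ \overline{R}^\beta(x,y)=a\right\}=E_\beta(a,a).\]

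Next I would invoke Theorem~\ref{theorem main theorem} with the parameter pair $(a,b)=(a,a)$, which is admissible because $0\leq a\leq a\leq\infty$. The theorem then delivers $\dim_H E_\beta(a,a)=2$, and the identification in the previous paragraph transports this equality to $\dim_H E_\beta(a)=2$. No matching upper bound needs to be argued separately: $E_\beta(a)\subseteq I^2$ implies $\dim_H E_\beta(a)\leq 2$ automatically, and that inequality is in any case already part of the content of Theorem~\ref{theorem main theorem}.

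The hard part is entirely absent here, because all of the genuine work—constructing Cantor-type subsets of $I^2$ on which the waiting times along both axes are controlled to realize the prescribed liminf and limsup—has been carried out in the proof of Theorem~\ref{theorem main theorem}. The only non-routine observation is the set-theoretic one that fixing the limit is the same as jointly fixing the liminf and the limsup at the same value, which handles even the boundary cases $a=0$ and $a=\infty$ uniformly thanks to the conventions recalled above.
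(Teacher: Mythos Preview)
Your proof is correct and matches the paper's own treatment: the paper states that the corollary follows immediately from Theorem~\ref{theorem main theorem}, and your argument simply makes explicit the identification $E_\beta(a)=E_\beta(a,a)$ and applies the theorem with $b=a$. There is nothing to add.
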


The above two results indicate that the exceptional set of \eqref{R(x,y)} is of full Hausdorff dimension two and meanwhile the sets $E_\beta(a,b)$ and $E_\beta(a)$ are both of rich multifractal structures.

The outline of this paper is shown as below. In Section 2, we will collect some notations and properties about $\beta$-expansion and full basic intervals. In Section 3, we will present some dimensional results about homogeneous sets and cut sets for ready use. Section 4 is devoted to the proof of Theorem~\ref{theorem main theorem} by constructing sufficiently large Moran sets. In the last section, we will generalize Theorem~\ref{theorem main theorem} by considering the accumulation points of the sequence $\big\{\frac{1}{n}\log W_n^\beta(x,y)\big\}_{n\geq1}$ and other growth rates of the waiting time of $I_n(y)$ by $x$. 

\section{Full basic intervals}
This section is devoted to the introduction of some notations, definitions and basic properties of $\beta$-expansion and full basic intervals.

First, we introduce the definition of $\beta$-expansion of 1 which plays an important role in the description of admissible sequences. Let $\beta>1$. According to~\eqref{formula beta 1}, if the $\beta$-expansion of 1 terminates, i.e., there exists $n\geq1$ such that $\vep_n(1,\beta)\neq0$ and $\vep_i(1,\beta)=0$ for all $i\geq n+1$, then we call $\beta$ a simple Parry number and put
\[(\vep_1^*(1,\beta),\vep_2^*(1,\beta),\ldots)=(\vep_1(1,\beta),\vep_2(1,\beta),\ldots,\vep_n(1,\beta)-1)^\infty.\] 
Here, $(w)^\infty$ denotes the periodic sequence $(w,w,w,\ldots)$ when $w$ is a word. Otherwise, we write $\vep_i^*(1,\beta)=\vep_i(1,\beta)$, $i\geq1$, and use $(\vep_1^*(1,\beta),\vep_2^*(1,\beta),\ldots)$ to denote the $\beta$-expansion of 1. In both cases, the infinite $\beta$-expansion of 1 is denoted by
\[\vep^*(1,\beta)=(\vep_1^*(1,\beta),\vep_2^*(1,\beta),\ldots).\]
Let $<_{lex}$ be the lexicographical order between two sequences in $\Sigma^\infty$. We say 
\[(\vep_1,\vep_2,\vep_3,\ldots)<_{lex}(\eta_1,\eta_2,\eta_3,\ldots)\]
if and only if $\vep_1<\eta_1$ or there exists $n\geq1$ such that $\vep_i=\eta_i$ for $1\leq i<n$ but $\vep_n<\eta_n$. 

The following theorem gives a result of Parry on charactering whether a digit sequence is admissible. 

\begin{thm}[Parry \cite{Pa}]\label{theorem parry}
  \noindent	
  \begin{enumerate}
    \item A non-negative integer sequence $(\vep_1,\vep_2,\ldots)$ is $\beta$-admissible if and only if 
    \[(\vep_i,\vep_{i+1},\ldots)<_{lex}(\vep_1^*(1,\beta),\vep_2^*(1,\beta),\ldots),\quad\forall i\geq1.\]
    \item If $1<\beta_1<\beta_2$, then $\Sigma_{\beta_1}\subset\Sigma_{\beta_2}$ and $\Sigma_{\beta_1}^n\subset\Sigma_{\beta_2}^n$, $n\geq1$.
  \end{enumerate}	
\end{thm}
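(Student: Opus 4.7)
The plan is to prove part (1) first and then derive part (2) from it using the monotonicity of the map $\beta \mapsto \vep^*(1,\beta)$ in the lexicographic order.

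For the necessity direction of part (1), suppose $\vep = \vep(x,\beta)$ with $x \in I$. The shift $(\vep_i,\vep_{i+1},\ldots)$ is then the $\beta$-expansion of $T_\beta^{i-1}x \in [0,1)$, so the problem reduces to showing that $\vep(y,\beta) <_{lex} \vep^*(1,\beta)$ for every $y \in [0,1)$. I would argue by digit-by-digit comparison: agreement on the first $n$ digits of $\vep(y,\beta)$ and $\vep(1,\beta)$ forces $T_\beta^n y = T_\beta^n 1$, so total equality yields $y = 1$, a contradiction. In the non-simple-Parry case this is immediate since $\vep^*(1,\beta) = \vep(1,\beta)$; in the simple-Parry case one additionally checks via a geometric-series computation that agreement of a tail of $\vep(y,\beta)$ with the periodic completion $(\vep_1(1,\beta),\ldots,\vep_n(1,\beta)-1)^\infty$ again forces $y = 1$.

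For sufficiency in part (1), given a sequence $\vep$ satisfying the lex condition I would set $x := \sum_{n \geq 1} \vep_n/\beta^n$, use the $i=1$ inequality (together with the fact that $\vep^*(1,\beta)$ represents the value $1$) to conclude $x < 1$, and prove inductively that $\vep_n(x,\beta) = \vep_n$ for all $n\geq 1$. The induction step uses the lex condition on $(\vep_{n+1},\vep_{n+2},\ldots)$ to ensure $T_\beta^n x = \sum_{k \geq 0} \vep_{n+1+k}/\beta^{k+1} \in [0,1)$, so that the greedy algorithm applied to $T_\beta^{n-1}x$ returns precisely the digit $\vep_n$.

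For part (2), I would reduce $\Sigma_{\beta_1} \subset \Sigma_{\beta_2}$ to the monotonicity statement $\vep^*(1,\beta_1) \leq_{lex} \vep^*(1,\beta_2)$ via part (1); the length-$n$ inclusion $\Sigma_{\beta_1}^n \subset \Sigma_{\beta_2}^n$ then follows by taking prefixes of admissible sequences. Monotonicity itself is proved by induction on the digits of $\vep(1,\beta)$: the first digit $[\beta]$ is non-decreasing in $\beta$, and once the first $k-1$ digits agree for two values of $\beta$, the $k$-th greedy digit is again non-decreasing because the iterates $\beta T_\beta^{k-1}(1)$ depend monotonically on $\beta$ along that common prefix. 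The hard step throughout is the simple-Parry case, where the periodic replacement in the definition of $\vep^*$ must be handled carefully: one verifies by a telescoping/geometric-series computation that this periodic sequence truly encodes the value $1$, and this is precisely the reason for the $-1$ adjustment in Parry's definition which makes the lexicographic characterization uniform across all $\beta > 1$.
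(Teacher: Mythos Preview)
The paper does not supply a proof of this theorem: it is quoted from Parry \cite{Pa} as a classical result and stated without argument. So there is no paper proof to compare your proposal against.

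That said, your outline has a genuine gap in the necessity direction of part (1). The claim ``agreement on the first $n$ digits of $\vep(y,\beta)$ and $\vep(1,\beta)$ forces $T_\beta^n y = T_\beta^n 1$'' is false: sharing the first $n$ digits only places $y$ and $1$ in the same $n$-th order basic interval, on which $T_\beta^n$ is an injective affine map, so $T_\beta^n y = T_\beta^n 1$ would already force $y=1$. More seriously, even after you conclude $\vep(y,\beta)\neq\vep^*(1,\beta)$, you have not established the \emph{direction} of the lexicographic inequality. The standard fix is: let $k$ be the first index where the digits differ; since $T_\beta^{k-1}$ is an increasing affine map on the common $(k-1)$-cylinder and $y<1$, one gets $T_\beta^{k-1}y < T_\beta^{k-1}1$, hence $\vep_k(y,\beta)=[\beta T_\beta^{k-1}y]\leq[\beta T_\beta^{k-1}1]=\vep_k(1,\beta)$, with strict inequality since the $k$-th digits differ.

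Your sufficiency sketch and the reduction of (2) to monotonicity of $\beta\mapsto\vep^*(1,\beta)$ are along the right lines. However, the inductive step you propose for monotonicity (``the iterates $\beta T_\beta^{k-1}(1)$ depend monotonically on $\beta$ along that common prefix'') is not obvious: writing this iterate out gives $\beta^{k}-a_1\beta^{k-1}-\cdots-a_{k-1}\beta$, a polynomial with mixed-sign coefficients whose monotonicity in $\beta$ requires a separate argument. Parry's original proof instead characterises the image $\{\vep^*(1,\beta):\beta>1\}$ as the set of self-dominant sequences and shows the map $\beta\mapsto\vep^*(1,\beta)$ is an order-isomorphism onto it, which avoids this issue.
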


Moreover, the following theorem, given by R\'{e}nyi, provides a description of the cardinality of $\Sigma_\beta^n$ and indicates that the dynamical system $(I,T_\beta)$ admits $\log\beta$ as its topological entropy.

\begin{thm}[R\'{e}nyi \cite{R}]\label{theorem renyi}
	Let $\beta>1$. For any $n\geq1$, we have
	\begin{align}\label{inequality renyi}
		\beta^n\leq\card\Sigma_\beta^n\leq\frac{\beta^{n+1}}{\beta-1}, 
	\end{align}
    where $\card$ denotes the number of elements in a finite set.
\end{thm}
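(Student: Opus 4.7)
For the lower bound $\beta^n\leq\card\Sigma_\beta^n$, the argument is purely metric. The basic intervals $\{I_n(w):w\in\Sigma_\beta^n\}$ partition $I=[0,1)$, and since $T_\beta$ is piecewise linear with slope $\beta$, the iterate $T_\beta^n$ is affine of slope $\beta^n$ on each $I_n(w)$ and sends it injectively into $[0,1)$. Thus $|I_n(w)|\leq\beta^{-n}$ for every admissible $w$, and summing yields $1=\sum_{w}|I_n(w)|\leq\card\Sigma_\beta^n\cdot\beta^{-n}$, which is the desired inequality.

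For the upper bound $\card\Sigma_\beta^n\leq\beta^{n+1}/(\beta-1)$, I would use Parry's lexicographic characterisation (Theorem~\ref{theorem parry}) together with the identity $\sum_{i\geq 1}d_i\beta^{-i}=1$, where $(d_1,d_2,\ldots)=\vep^*(1,\beta)$. The plan is to classify each admissible word $(\vep_1,\ldots,\vep_n)$ by the smallest index $i$ (if it exists) at which $\vep_i<d_i$ while $\vep_j=d_j$ for $j<i$; Parry's criterion then permits the suffix $(\vep_{i+1},\ldots,\vep_n)$ to be \emph{any} admissible word of length $n-i$, giving $d_i\cdot\card\Sigma_\beta^{n-i}$ such words. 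Adding the at most one word equal to $(d_1,\ldots,d_n)$ itself, one arrives at the recursion
\[\card\Sigma_\beta^n\leq 1+\sum_{i=1}^n d_i\,\card\Sigma_\beta^{n-i},\]
with $\card\Sigma_\beta^0=1$. An induction on $n$, where one divides through by $\beta^n$ and collapses the sum using $\sum_i d_i\beta^{-i}=1$, then produces the claimed bound.

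The most delicate point is extracting the precise constant $\beta/(\beta-1)$ from this recursion: a naive induction leaves an $O(\beta^{-n})$ remainder at each step, and to close the loop one must either sharpen the recursion via the strict positivity of the tail $\sum_{i>n}d_i\beta^{-i}$ (which is automatic because $\vep^*(1,\beta)$ is infinite by construction) or aggregate the estimates into partial sums $\sum_{k\leq n}\card\Sigma_\beta^k\beta^{-k}$ before bounding. The lower bound, in contrast, is a one-liner once the partition property and the constant-slope structure of $T_\beta^n$ on basic intervals are noted.
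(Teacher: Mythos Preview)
The paper does not prove this statement at all: Theorem~\ref{theorem renyi} is quoted from R\'{e}nyi's original 1957 paper and used as a black box, so there is no ``paper's own proof'' to compare against. What you have written is therefore not a reconstruction but an independent proof sketch, and it is essentially the standard one.

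Your lower bound is exactly right and is a one-liner: the $n$-th order basic intervals partition $[0,1)$ and each has length at most $\beta^{-n}$, so their number is at least $\beta^n$. For the upper bound, your recursion
\[
\card\Sigma_\beta^n\ \leq\ 1+\sum_{i=1}^n d_i\,\card\Sigma_\beta^{n-i}
\]
is correct (the right-hand side may overcount because one has not checked the admissibility of the intermediate suffixes, but that is harmless for an upper bound). You are also right that a direct induction does not immediately close with the sharp constant $\beta/(\beta-1)$: the required tail estimate $\sum_{i>n}d_i\beta^{-i}\geq(\beta-1)\beta^{-(n+1)}$ can fail. The fix you propose---passing to the partial sums $B_N=\sum_{n\leq N}\beta^{-n}\card\Sigma_\beta^n$---is the right one and works cleanly: interchanging the order of summation and using $\sum_{i\geq1}d_i\beta^{-i}=1$ gives $B_N\leq\sum_{n\leq N}\beta^{-n}+B_{N-1}$, hence $\beta^{-N}\card\Sigma_\beta^N\leq\sum_{n\leq N}\beta^{-n}\leq\beta/(\beta-1)$, which is the claim. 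So the proposal is sound once that aggregation step is carried out explicitly.
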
  

Let the word $(\vep_1,\ldots,\vep_n)$ be admissible. It is obvious that the length of the $n$-th order basic interval $I_n(\vep_1,\ldots,\vep_n)$ is less than or equal to $\beta^{-n}$. If $\beta$ is an integer, then the equality always holds for any word of length $n$. Besides this good case, we might encounter another fine one, called full basic interval, which plays an important role in the present work.  

\begin{dfn}
	Let $(\vep_1,\ldots,\vep_n)\in\Sigma_\beta^n$, $n\geq1$. An $n$-th order basic interval $I_n(\vep_1,\ldots,\vep_n)$ is said to be \emph{full} if its length verifies
	\[|I_n(\vep_1,\ldots,\vep_n)|=\beta^{-n}.\]
\end{dfn}

The following lemma characterizes the full basic intervals.

\begin{lem}[See Lemma 3.1 in \cite{FW}]\label{lemma 31}
Let $\vep=(\vep_1,\ldots,\vep_n)\in\Sigma_\beta^n$, $n\geq1$. The following conditions are equivalent:
\begin{enumerate}
	\item $I_n(\vep_1,\ldots,\vep_n)$ is full;
	\item $T_\beta^nI_n(\vep_1,\ldots,\vep_n)=I$;
	\item For any $\eta=(\eta_1,\ldots,\eta_m)\in\Sigma_\beta^m$, $m\geq 1$, the concatenated word 
	\[\vep\ast\eta=(\vep_1,\ldots,\vep_n,\eta_1,\ldots,\eta_m)\]
	is admissible.
\end{enumerate}	
\end{lem}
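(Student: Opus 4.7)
The plan is to prove the three equivalences in the natural cycle (1)$\Leftrightarrow$(2) and (2)$\Leftrightarrow$(3), relying on the fact that $T_\beta^n$ acts on $I_n(\vep_1,\ldots,\vep_n)$ as an affine, order-preserving map of slope $\beta^n$. Concretely, if $x\in I_n(\vep_1,\ldots,\vep_n)$ then $T_\beta^n(x)=\beta^n x-(\beta^{n-1}\vep_1+\cdots+\vep_n)$, and the point $\vep_1/\beta+\cdots+\vep_n/\beta^n$ (the left endpoint of the basic interval) is mapped to $0$. This single observation is the workhorse of everything that follows.

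For (1)$\Leftrightarrow$(2) I would just apply this affine description: since $T_\beta^n$ is continuous and affine of slope $\beta^n$, it maps the interval $I_n(\vep_1,\ldots,\vep_n)$ onto an interval of length $\beta^n|I_n(\vep_1,\ldots,\vep_n)|$ whose left endpoint is $0$. This image is contained in $I=[0,1)$, so its length is at most $1$, and it equals $I$ if and only if the length is exactly $1$, i.e.\ if and only if $|I_n(\vep_1,\ldots,\vep_n)|=\beta^{-n}$. That settles the first equivalence.

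Next I would do (2)$\Rightarrow$(3). Given any admissible word $\eta=(\eta_1,\ldots,\eta_m)$, extend it to some admissible infinite sequence, which corresponds to a point $y\in I_m(\eta_1,\ldots,\eta_m)$. Hypothesis (2) says $y\in T_\beta^n(I_n(\vep_1,\ldots,\vep_n))$, so there exists $x\in I_n(\vep_1,\ldots,\vep_n)$ with $T_\beta^n(x)=y$. Reading off digits, the $\beta$-expansion of $x$ begins with $\vep_1,\ldots,\vep_n$ and continues with $\vep(y,\beta)$, so in particular $\vep\ast\eta$ is the first $n+m$ digits of an admissible sequence and is therefore admissible.

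The remaining direction (3)$\Rightarrow$(2) is where I expect the only real subtlety to arise. Given (3), for every admissible word $\eta$ of any length $m$ pick an admissible infinite extension; condition (3) together with the characterization of admissibility yields a point $x\in I_n(\vep_1,\ldots,\vep_n)$ whose expansion starts with $\vep\ast\eta$, and hence $T_\beta^n(x)$ lies in $I_m(\eta)$. This shows that $T_\beta^n(I_n(\vep_1,\ldots,\vep_n))$ meets every basic interval of every order, and so is dense in $I$. But $T_\beta^n$ is continuous and affine on the interval $I_n(\vep_1,\ldots,\vep_n)$, so the image is itself an interval starting at $0$; an interval in $[0,1)$ that is dense must equal $[0,1)$, giving (2). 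The only potential obstacle is the left-open/right-closed convention for basic intervals, which one has to handle cleanly when identifying endpoints and taking closures, but it does not affect the geometric picture.
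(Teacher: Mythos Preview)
The paper does not prove this lemma; it is quoted from \cite{FW} and used as a black box, so there is no in-paper argument to compare against. Your proof is correct and is the standard one: (1)$\Leftrightarrow$(2) is immediate from the fact that $T_\beta^n$ restricted to $I_n(\vep)$ is the affine map $x\mapsto\beta^nx-\sum_{j=1}^n\beta^{n-j}\vep_j$ sending the left endpoint to $0$, and the cycle through (3) follows from the definitions together with the observation that the image, being a subinterval of $[0,1)$ with left endpoint $0$ that meets every basic interval of every order, must be all of $[0,1)$. The endpoint issue you flag is harmless once one notes that $I_n(\vep)$ is left-closed and right-open (the paper's phrase ``left-open and right-closed'' in the paragraph introducing basic intervals is a slip), so the affine image is again half-open.
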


Below are some properties of the full basic intervals. As a note, we use $w^p$ to denote a word composed by $p$ $w$'s if the integer $p\geq1$ and $w$ is a letter or a word, $w^0$ is defined as the empty word $\emptyset$ and the concatenation $w\ast\emptyset$ is defined as $w$ itself.

\begin{lem}[See Lemma 3.2 and Corollary 3.3 in \cite{FW}]\label{lemma 32}
	Let $m,n\geq1$.
	\begin{enumerate}
		\item If $I_n(\vep_1,\ldots,\vep_n)$ is full, then for any word $(\eta_1,\ldots,\eta_m)\in\Sigma_\beta^m$, we have
		\[|I_{n+m}(\vep_1,\ldots,\vep_n,\eta_1,\ldots,\eta_m)|=|I_n(\vep_1,\ldots,\vep_n)|\cdot|I_m(\eta_1,\ldots,\eta_m)|;\]
		\item Let $p\in\mathbb{N}$. Then
		\[I_{n+p}(\vep_1,\ldots,\vep_n,0^p)\ \text{is full}\Leftrightarrow I_n(\vep_1,\ldots,\vep_n)\geq\beta^{-(n+p)}.\]	
	\end{enumerate}	
\end{lem}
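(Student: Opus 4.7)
Both parts hinge on the observation that on any $n$-th order basic interval $I_n(\vep_1,\ldots,\vep_n)$ the iterated transformation $T_\beta^n$ acts as an affine map of slope $\beta^n$ sending the left endpoint $L_n=\vep_1/\beta+\cdots+\vep_n/\beta^n$ to $0$. I would first establish this by induction on $n$: on each admissible cylinder the digit $[\beta x]$ is constant, so $T_\beta$ is affine with slope $\beta$, and iterating yields the claim. Consequently, $T_\beta^n$ is a bijection from $I_n(\vep_1,\ldots,\vep_n)$ onto $[0,\beta^n|I_n(\vep_1,\ldots,\vep_n)|)$, and fullness of $I_n$ amounts to $T_\beta^n I_n=I$, which is the content of Lemma~\ref{lemma 31} already at our disposal.

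For (1), the identity $\vep_{n+j}(x,\beta)=\vep_j(T_\beta^n x,\beta)$, valid for every $j\geq 1$, shows that
\[I_{n+m}(\vep\ast\eta)=\{x\in I_n(\vep):T_\beta^n x\in I_m(\eta)\}.\]
Since $I_n(\vep)$ is assumed full, the target $I_m(\eta)$ lies entirely in $T_\beta^n I_n(\vep)=I$, and pulling it back through the affine bijection of slope $\beta^n$ produces an interval of length $\beta^{-n}|I_m(\eta)|=|I_n(\vep)|\cdot|I_m(\eta)|$, which is exactly $|I_{n+m}(\vep\ast\eta)|$.

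For (2), the same description yields $I_{n+p}(\vep,0^p)=\{x\in I_n(\vep):T_\beta^n x\in I_p(0^p)\}$, and a direct check gives $I_p(0^p)=[0,\beta^{-p})$. Using the affine form $T_\beta^n(x)=\beta^n(x-L_n)$ on $I_n(\vep)$, the pulled-back set is the intersection of $I_n(\vep)$ with $[L_n,L_n+\beta^{-(n+p)})$, hence an interval of length $\min(|I_n(\vep)|,\beta^{-(n+p)})$. Fullness of $I_{n+p}(\vep,0^p)$, i.e.\ that this length equals $\beta^{-(n+p)}$, is therefore equivalent to $|I_n(\vep)|\geq\beta^{-(n+p)}$.

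The only mildly delicate point is checking that $(\vep_1,\ldots,\vep_n,0^p)$ is admissible so that $I_{n+p}(\vep,0^p)$ is non-empty, but this follows immediately from Parry's criterion (Theorem~\ref{theorem parry}): appending zeros can only make each suffix lexicographically smaller. Beyond this, everything is a routine unpacking of definitions around the affine action of $T_\beta^n$, and I do not anticipate any serious obstacle.
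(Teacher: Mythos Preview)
Your argument is correct. The paper does not give its own proof of this lemma; it simply imports the statement from \cite{FW} (Lemma~3.2 and Corollary~3.3 there), so there is nothing in the present paper to compare against. Your route via the affine action of $T_\beta^n$ on the cylinder $I_n(\vep)$ is exactly the standard one and is essentially how the result is established in \cite{FW}: once you know $T_\beta^n\colon I_n(\vep)\to[0,\beta^n|I_n(\vep)|)$ is the bijection $x\mapsto\beta^n(x-L_n)$, both parts are immediate pullback computations. Two minor remarks: (i) for the admissibility of $(\vep,0^p)$ you can bypass Parry's criterion entirely by observing that the left endpoint $L_n$ itself has $\beta$-expansion $(\vep_1,\ldots,\vep_n,0,0,\ldots)$, since $T_\beta^n(L_n)=0$; (ii) the statement as printed in the paper has an obvious typo ($I_n(\vep)\geq\beta^{-(n+p)}$ should read $|I_n(\vep)|\geq\beta^{-(n+p)}$), which you have correctly interpreted.
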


By Lemma~\ref{lemma 32} (1) and the definition of full basic interval, we can easily obtain  
\begin{cor}\label{corollary full word}
	Let $m,n\geq1$. If the basic intervals $I_n(\vep_1,\ldots,\vep_n)$ and $I_m(\eta_1,\ldots,\eta_m)$ are full, then $I_{n+m}(\vep_1,\ldots,\vep_n,\eta_1,\ldots,\eta_m)$ is also full.
\end{cor}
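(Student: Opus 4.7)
The plan is a one-line application of the multiplicative length formula in Lemma~\ref{lemma 32}(1). First I would record that the concatenation $(\vep_1,\ldots,\vep_n,\eta_1,\ldots,\eta_m)$ is actually admissible, so that the claim refers to a well-defined basic interval. This is automatic from Lemma~\ref{lemma 31}(3): the fullness of $I_n(\vep_1,\ldots,\vep_n)$ is equivalent to the statement that concatenating this prefix with any admissible word of any length produces an admissible word, and $(\eta_1,\ldots,\eta_m)\in\Sigma_\beta^m$ is admissible by hypothesis.

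With admissibility in hand, I would invoke Lemma~\ref{lemma 32}(1) with $(\vep_1,\ldots,\vep_n)$ as the full prefix and $(\eta_1,\ldots,\eta_m)$ as the admissible suffix, yielding
\[
|I_{n+m}(\vep_1,\ldots,\vep_n,\eta_1,\ldots,\eta_m)|
= |I_n(\vep_1,\ldots,\vep_n)|\cdot|I_m(\eta_1,\ldots,\eta_m)|
= \beta^{-n}\cdot\beta^{-m}=\beta^{-(n+m)},
\]
where the two factors collapse to $\beta^{-n}$ and $\beta^{-m}$ because $I_n$ and $I_m$ are full. By the definition of a full basic interval, this equality is exactly the assertion that $I_{n+m}(\vep_1,\ldots,\vep_n,\eta_1,\ldots,\eta_m)$ is full, which is what we needed.

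I do not anticipate any obstacle: the corollary is essentially a bookkeeping consequence of the multiplicative length formula, which is why the author advertises it as immediate. The only substantive point worth being explicit about is admissibility of the concatenation, and that is already packaged inside Lemma~\ref{lemma 31}(3).
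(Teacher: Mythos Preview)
Your proof is correct and matches the paper's approach exactly: the author derives the corollary directly from Lemma~\ref{lemma 32}(1) together with the definition of a full basic interval, which is precisely the multiplicative length computation you carry out. Your explicit remark that admissibility of the concatenation follows from Lemma~\ref{lemma 31}(3) is a helpful clarification the paper leaves implicit.
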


Let $\beta>1$ and denote
\[l_n(\beta)=\max\left\{k\geq0\colon\vep_{n+j}^*(1,\beta)=0,\ \text{for all}\ 1\leq j\leq k\right\},\quad n\geq1.\]
That is, $l_n(\beta)$ is the length of the longest consecutive zeros following the digit $\vep_n^*(1,\beta)$ in the $\beta$-expansion of 1. 
Then, more generally, we have

\begin{lem}[See \cite{LW}]\label{lemma LW} 
	Let $n\geq1$ and $M_n=\max_{1\leq i\leq n}\{l_i(\beta)\}$. For any admissible word $(\vep_1,\ldots,\vep_n)$, if $m\geq M_n(\beta)$, then the basic interval $I_{n+m}(\vep_1,\ldots,\vep_n,0^{m+1})$ is full.
\end{lem}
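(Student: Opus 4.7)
The plan is to invoke Lemma~\ref{lemma 31}\,(3), which reduces the fullness statement to a purely combinatorial claim: it will suffice to prove that for every admissible sequence $(\eta_1,\eta_2,\ldots)\in\Sigma_\beta$, the concatenation
\[(\vep_1,\ldots,\vep_n,\underbrace{0,\ldots,0}_{m+1},\eta_1,\eta_2,\ldots)\]
is itself admissible. By Parry's criterion (Theorem~\ref{theorem parry}\,(1)), this amounts to checking, for every $i\geq1$, that the $i$-th tail of the concatenation is $<_{lex}\vep^*(1,\beta)$.

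I would first dispose of the easy tails. For $i\geq n+m+2$ the tail is simply a tail of $(\eta_k)_{k\geq1}$, so admissibility of $\eta$ supplies the required inequality. For $n<i\leq n+m+1$ the tail begins with the digit $0$, while $\vep_1^*(1,\beta)\geq 1$ (because $\beta>1$), so the lex comparison is decided at the first coordinate. The substantive range is $1\leq i\leq n$. Here admissibility of $(\vep_1,\ldots,\vep_n)$ already gives
\[(\vep_i,\ldots,\vep_n)\leq_{lex}(\vep_1^*(1,\beta),\ldots,\vep_{n-i+1}^*(1,\beta)),\]
and a strict inequality is resolved within the first $n-i+1$ coordinates. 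So the one remaining task is, in the equality subcase, to verify
\[(\underbrace{0,\ldots,0}_{m+1},\eta_1,\eta_2,\ldots)<_{lex}(\vep_{n-i+2}^*(1,\beta),\vep_{n-i+3}^*(1,\beta),\ldots).\]

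This is precisely where the hypothesis $m\geq M_n$ enters. By the definition of $l_{n-i+1}(\beta)$, the right-hand side begins with exactly $l_{n-i+1}(\beta)$ zeros followed by a positive digit. Since $m\geq M_n\geq l_{n-i+1}(\beta)$, one has $m+1>l_{n-i+1}(\beta)$, so the comparison is decided at coordinate $l_{n-i+1}(\beta)+1$, where the left side is $0$ and the right side is strictly positive. The main obstacle I anticipate is the bookkeeping of shift indices in this equality subcase; once the alignment between positions in the concatenation and in $\vep^*(1,\beta)$ is correctly set up, the hypothesis $m\geq M_n$ supplies exactly the number of zeros the lexicographic argument needs, and Lemma~\ref{lemma 31}\,(3) then delivers fullness.
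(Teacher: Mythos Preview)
The paper does not supply its own proof of this lemma; it is quoted from \cite{LW} with no argument given. Your proposal is correct and is the natural direct argument: reduce fullness via Lemma~\ref{lemma 31}\,(3) to the admissibility of every concatenation, and verify Parry's criterion (Theorem~\ref{theorem parry}\,(1)) tail by tail. The three ranges of $i$ are handled correctly, and in the only nontrivial case---equality $(\vep_i,\ldots,\vep_n)=(\vep_1^*(1,\beta),\ldots,\vep_{n-i+1}^*(1,\beta))$---you correctly invoke the definition of $l_{n-i+1}(\beta)$ to locate a strictly positive digit $\vep^*_{\,n-i+1+l_{n-i+1}(\beta)+1}(1,\beta)$, and the hypothesis $m\geq M_n\geq l_{n-i+1}(\beta)$ guarantees that the corresponding position on the left still lies within the block $0^{m+1}$. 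One cosmetic remark: the statement in the paper writes $I_{n+m}$ for a word of length $n+m+1$, which is evidently a typo for $I_{n+m+1}$; your argument is unaffected since it works directly with the word $(\vep_1,\ldots,\vep_n,0^{m+1})$.
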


Next, we would like to introduce an important subset $A_0 $ of $(1,\infty)$ (see~\cite{LW}). As it turns out, many problems about $\beta$-expansions are easily dealt with if $\beta$ takes value in $A_0$. Then, by the technique of approximation, the corresponding problems for the general $\beta>1$ can be  completely solved at last. 

The set $A_0 $ is defined as follows:
\begin{align}\label{definition b0}
	A_0=\big\{\beta>1\colon \{l_n(\beta)\}_{n\geq0}\ \text{is bounded}\big\}.
\end{align}
It has the following properties.

\begin{lem}[See \cite{LTWW,S}]\label{lemma A zero}
	$A_0$ is uncountable and dense in $(1,\infty)$. Moreover, we have $\mathcal{L}(A_0)=0$ and $\dim_HA_0=1$, where $\mathcal{L}$ is the Lebesgue measure. 
\end{lem}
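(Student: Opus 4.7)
The lemma asserts four properties of $A_0$, which I would handle in turn.

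\textbf{Density.} The basic observation is that every Parry number (i.e., every $\beta>1$ for which $\vep^*(1,\beta)$ is eventually periodic) lies in $A_0$, with $l_n(\beta)$ uniformly bounded by the eventual period. Parry numbers are dense in $(1,\infty)$: given any $\beta>1$ and $n\ge 1$, the truncation of $\vep^*(1,\beta)$ at level $n$ extended by a suitable short eventually-periodic tail yields an admissible sequence which coincides with $\vep^*(1,\beta')$ for some Parry number $\beta'$ satisfying $|\beta-\beta'|=O(\beta^{-n})$.

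\textbf{Uncountability.} I would exhibit an explicit uncountable subfamily. Fix an integer $d\ge 2$ and consider the sequences $(d,\omega_2,\omega_3,\ldots)$ with $\omega_i\in\{1,\ldots,d-1\}$. Every proper shift of such a sequence begins with a digit in $\{1,\ldots,d-1\}$, hence is strictly less (in lex order) than the sequence itself; by Parry's theorem this sequence equals $\vep^*(1,\beta)$ for a unique $\beta\in(d,d+1]$. The map from sequences to parameters is injective and its image sits inside $A_0$ (in fact $l_n\equiv 0$), so $A_0$ is uncountable.

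\textbf{Null Lebesgue measure.} Write $A_0=\bigcup_{N\ge 1}A_0^N$ with $A_0^N=\{\beta>1\colon l_n(\beta)\le N\ \forall n\}$ and show $\mathcal{L}(A_0^N)=0$ for each $N$. Membership in $A_0^N$ forbids the orbit of $1$ under $T_\beta$ from staying in $[0,1/\beta)$ for $N+1$ consecutive iterates, equivalently forbids the block $0^{N+1}$ from appearing in $\vep^*(1,\beta)$. Using the monotonicity and regularity of $\beta\mapsto\vep^*(1,\beta)$ as in Schmeling~\cite{S}, one transfers a Borel--Cantelli-type argument for the occurrence of such long zero blocks into a bound on the Lebesgue measure of $A_0^N$, proving it vanishes.

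\textbf{Full Hausdorff dimension.} The upper bound $\dim_HA_0\le 1$ is automatic. For the lower bound, fix $d\ge 2$ and $N\ge 1$ and let $\Omega_{d,N}$ be the subshift of $\{0,\ldots,d-1\}^{\mathbb N}$ of sequences starting with $d-1$, satisfying Parry's self-admissibility condition, and having no run of more than $N$ consecutive zeros. Essentially a subshift of finite type, $\Omega_{d,N}$ has topological entropy approaching $\log d$ as $N\to\infty$. The coding $\omega\mapsto\beta(\omega)$ that sends each $\omega\in\Omega_{d,N}$ to the $\beta\in A_0$ with $\vep^*(1,\beta)=\omega$ is injective, monotone, and H\"older continuous with exponent tending to $1$, obtained by comparing $\beta_1-\beta_2$ with $\beta_1^{-n}$ when the two expansions of $1$ first differ at index $n$. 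A mass-distribution argument using the measure of maximal entropy on $\Omega_{d,N}$ then gives $\dim_HA_0\ge h_{\mathrm{top}}(\Omega_{d,N})/\log d$, whose supremum over $d,N$ is $1$. The principal obstacle is precisely this last regularity estimate for $\omega\leftrightarrow\beta$, which is the technical heart of~\cite{LTWW,S}.
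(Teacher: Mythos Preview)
The paper does not supply its own proof of this lemma; it is stated as a quotation of results from \cite{LTWW,S} and left without argument. There is therefore nothing in the paper to compare your proposal against beyond the bare citation.

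That said, your sketch is a faithful outline of the arguments one finds in those references. Density via Parry numbers and uncountability via the explicit family $(d,\omega_2,\omega_3,\ldots)$ with $\omega_i\in\{1,\ldots,d-1\}$ are both standard and correct. For $\mathcal{L}(A_0)=0$ and $\dim_HA_0=1$ you correctly identify Schmeling's analysis of the parameter map $\beta\mapsto\vep^*(1,\beta)$ as the key input, and you are right that the H\"older regularity of the coding $\omega\mapsto\beta(\omega)$ is the technical core; your entropy/mass-distribution scheme for the lower dimension bound is exactly the mechanism used in \cite{S}. The only caveat is that your measure-zero paragraph is more of a signpost than an argument---the actual transfer from symbolic Borel--Cantelli to a parameter-space estimate requires the distortion bounds established in \cite{S}, which you invoke but do not reproduce. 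Since the paper itself merely cites the result, this level of detail is already more than it provides.
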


Since the sequence $\{l_n(\beta)\}_{n\geq1}$ is bounded for each $\beta$ in $A_0$, by Lemma~\ref{lemma 31}, Lemma~\ref{lemma 32} and Lemma~\ref{lemma LW}, we may obtain  

\begin{lem}\label{lemma 0 m}
	Let $\beta\in A_0$ and $(\vep_1,\ldots,\vep_n)$, $n\geq1$, be any admissible word. There exists an integer $M>0$ such that $I_{n+M}(\vep_1,\ldots,\vep_n,0^M)$ is full. Moreover, we have 
	\begin{enumerate}
		\item  the word $(\vep_1,\ldots,\vep_n,0^M)$ can be concatenated behind by any admissible word $(\eta_1,\ldots,\eta_m)$ with $m\geq1$;
		\item the length of basic interval $I_n(\vep_1,\ldots,\vep_n)$ satisfies that
		\[\beta^{-(n+M)}\leq|I_n(\vep_1,\ldots,\vep_n)|\leq\beta^{-n}.\]
	\end{enumerate}	
\end{lem}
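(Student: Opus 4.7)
The statement is a straightforward consequence of the definition of $A_0$ combined with the earlier lemmas on full basic intervals, so the plan is short.

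First, I would choose $M$ uniformly. Since $\beta\in A_0$, the sequence $\{l_n(\beta)\}_{n\geq 1}$ is bounded by definition~\eqref{definition b0}. Set
\[
L=\sup_{n\geq 1}l_n(\beta)<\infty,\qquad M=L+1.
\]
This $M$ depends only on $\beta$ and not on the admissible word $(\vep_1,\ldots,\vep_n)$.

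Next I would apply Lemma~\ref{lemma LW}. For the given admissible word $(\vep_1,\ldots,\vep_n)$ we have $M_n(\beta)=\max_{1\leq i\leq n}l_i(\beta)\leq L$. Taking $m=L$ in that lemma, the inequality $m\geq M_n(\beta)$ is satisfied, and therefore the basic interval obtained by appending $m+1=M$ zeros to $(\vep_1,\ldots,\vep_n)$ is full; that is, $I_{n+M}(\vep_1,\ldots,\vep_n,0^M)$ is full.

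For part (1) I would invoke the characterization of fullness in Lemma~\ref{lemma 31}(3): since $(\vep_1,\ldots,\vep_n,0^M)$ corresponds to a full basic interval, its concatenation with any admissible word $(\eta_1,\ldots,\eta_m)$, $m\geq 1$, is again admissible. For part (2), the upper bound $|I_n(\vep_1,\ldots,\vep_n)|\leq\beta^{-n}$ is the general bound recalled in the paragraph preceding the definition of full basic interval. The lower bound follows by monotonicity of basic intervals: the full interval $I_{n+M}(\vep_1,\ldots,\vep_n,0^M)$ is contained in $I_n(\vep_1,\ldots,\vep_n)$, hence
\[
|I_n(\vep_1,\ldots,\vep_n)|\geq|I_{n+M}(\vep_1,\ldots,\vep_n,0^M)|=\beta^{-(n+M)}.
\]

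There is no real obstacle here; the only mild subtlety is to make sure that the integer $M$ is chosen uniformly (independently of $n$ and of the particular admissible word), which is precisely what membership in $A_0$ provides.
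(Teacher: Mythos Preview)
Your proof is correct and matches the paper's own justification, which simply cites Lemmas~\ref{lemma 31}, \ref{lemma 32} and~\ref{lemma LW} together with the boundedness of $\{l_n(\beta)\}$ for $\beta\in A_0$. The only cosmetic difference is that for the lower bound in (2) the paper points to Lemma~\ref{lemma 32}(2) (fullness of $I_{n+M}(\vep_1,\ldots,\vep_n,0^M)$ is equivalent to $|I_n(\vep_1,\ldots,\vep_n)|\geq\beta^{-(n+M)}$), whereas you obtain it by the containment $I_{n+M}(\vep_1,\ldots,\vep_n,0^M)\subset I_n(\vep_1,\ldots,\vep_n)$; these are the same observation.
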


\section{Some lemmas}
In this section, we collect some definitions and lemmas for ready use. The scope includes homogeneous Moran sets, an invariant property of Hausdorff dimension related to the subset of $\mathbb{N}$ with density zero, cut sets and the existence of sequences with prescribed growth rates.

At first, we introduce the definition of homogeneous Moran set. Let  $\{N_k\}_{k\geq1}$ be a sequence of integers and $\{c_k\}_{k\geq1}$ be a sequence of positive numbers satisfying $N_k\geq2$, $0<c_k<1$, $k\geq1$; and  $N_1c_1\leq\delta$, $N_kc_k\leq1$, $k\geq2$, $\delta>0$. Denote
\[D_0=\{\emptyset\},\ D_k=\{(i_1,\ldots,i_k)\colon 1\leq i_j\leq N_j,1\leq j\leq k\}\quad\text{and}\quad D=\bigcup_{k\geq0}D_k.\]
Suppose that $J$ is an interval of length $\delta$. A collection $\mathcal{F}=\{J_\sigma\colon \sigma\in D\}$ of subintervals of $J$ is said to have \emph{homogeneous structure} if it satisfies the following conditions:
\begin{enumerate}
	\item $J_{\emptyset}=J$;
	\item For any $\sigma\in D_{k-1}$ with $k\geq1$, $J_{\sigma\ast j}$, $1\leq j\leq N_k$, are subintervals of $J_\sigma$ and $\text{int}(J_{\sigma\ast i})\cap\text{int}(J_{\sigma\ast j})=\emptyset$ if $i\neq j$, where $\text{int}$ denotes the interior of a set;
	\item For any $\sigma\in D_{k-1}$ with $k\geq1$, we have
	\[\frac{|J_{\sigma\ast j}|}{J_\sigma}=c_j,\quad 1\leq j\leq N_k.\]
\end{enumerate} 
If the collection $\mathcal{F}$ is of homogeneous structure, then the set
\begin{align}\label{definition mj}
	\mathcal{M}=\mathcal{M}\big(J,\{N_k\}_{k\geq1},\{c_k\}_{k\geq1}\big)=\bigcap_{k\geq1}\bigcup_{\sigma\in D_k}J_\sigma
\end{align} 
is called a \emph{homogeneous Moran set} determined by $\mathcal{F}$.

\begin{lem}[See Theorem 2.1 and Corollary 2.1 in \cite{FWW}]\label{lemma FWW}
	Let $\mathcal{M}$ be the homogeneous Moran set defined in~\eqref{definition mj} and write
	\[s=\liminf_{k\to\infty}\frac{\log (N_1N_2\cdots N_k)}{-\log(c_1c_2\cdots c_{k+1}N_{k+1})}.\] 
	Then we have $\dim_H\mathcal{M}\geq s$. 
	In addition, if $\inf_{k\geq1}c_k>0$, then $\dim_H\mathcal{M}=s$.
\end{lem}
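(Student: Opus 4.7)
The plan for Lemma \ref{lemma FWW} is to split the statement into its two inequalities and handle them separately: the lower bound by a mass distribution argument on a natural measure supported on $\mathcal{M}$, and the upper bound by the obvious cover of $\mathcal{M}$ by its level-$k$ basic intervals.

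For the lower bound $\dim_H\mathcal{M}\geq s$, I would put a Borel probability measure $\mu$ on $\mathcal{M}$ by setting $\mu(J_\sigma)=(N_1 N_2\cdots N_k)^{-1}$ for every $\sigma\in D_k$ and extending in the standard way (the consistency $\sum_{j=1}^{N_{k+1}}\mu(J_{\sigma\ast j})=\mu(J_\sigma)$ is obvious). Fix $t<s$; the defining $\liminf$ then yields
\[
(N_1\cdots N_k)^{-1}\leq (c_1c_2\cdots c_{k+1}N_{k+1})^{t}
\]
for all sufficiently large $k$. For a ball $B(x,r)$ with $r$ small, I choose the unique $k$ with $c_1\cdots c_{k+1}\delta\leq r<c_1\cdots c_k\delta$. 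Since level-$k$ intervals have disjoint interiors and length exceeding $r$, $B(x,r)$ meets at most two of them, and inside each it meets at most $\min\{N_{k+1},\,3r/(c_1\cdots c_{k+1}\delta)\}$ level-$(k+1)$ subintervals, each of mass $(N_1\cdots N_{k+1})^{-1}$. Separating the two cases of this minimum and inserting the exponent inequality above, a direct calculation delivers $\mu(B(x,r))\leq C(t,\delta)\,r^{t}$. The mass distribution principle then gives $\dim_H\mathcal{M}\geq t$, and letting $t\uparrow s$ concludes the lower bound.

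For the upper bound $\dim_H\mathcal{M}\leq s$ under $\inf_k c_k>0$, I would cover $\mathcal{M}$ by its $N_1\cdots N_k$ level-$k$ basic intervals, each of length $c_1\cdots c_k\delta$. For any $s'>s$, the definition of $s$ produces infinitely many $k$ with $N_1\cdots N_k<(c_1\cdots c_{k+1}N_{k+1})^{-s'}$, which upon multiplying by $(c_1\cdots c_k)^{s'}$ reduces to
\[
N_1\cdots N_k\,(c_1\cdots c_k)^{s'}<(c_{k+1}N_{k+1})^{-s'}\leq (2c_*)^{-s'},
\]
where $c_*:=\inf_k c_k>0$ and $N_{k+1}\geq 2$. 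Along this subsequence the $s'$-cost of the natural cover is uniformly bounded while its mesh tends to $0$ (since $c_k\leq 1/2$ by $N_kc_k\leq 1$ and $N_k\geq 2$), so $\mathcal{H}^{s'}(\mathcal{M})<\infty$ and $\dim_H\mathcal{M}\leq s'$; letting $s'\downarrow s$ completes the argument.

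The main technical obstacle is the case analysis in the mass estimate: the threshold between $N_{k+1}\leq 3r/(c_1\cdots c_{k+1}\delta)$ and its reverse is exactly the point at which the product $c_1\cdots c_{k+1}N_{k+1}$ appearing in the denominator of the formula for $s$ emerges naturally, and one has to verify by hand that in both regimes the exponent inequality $(N_1\cdots N_k)^{-1}\leq(c_1\cdots c_{k+1}N_{k+1})^t$ correctly converts the combinatorial bound into a polynomial bound of the form $r^t$ (using $t<1$, which is automatic since $\mathcal{M}\subset\mathbb{R}$). The hypothesis $\inf c_k>0$ is needed only for the upper bound, precisely to absorb the extra $c_{k+1}^{-s'}$ factor produced when one passes from the $(k+1)$-indexed denominator in the definition of $s$ to a statement about level-$k$ covers; without this hypothesis the level-$k$ cover can be wasteful and one must in general invoke more elaborate constructions.
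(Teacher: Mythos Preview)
The paper does not actually prove this lemma: it is quoted verbatim as a known result from Feng--Wen--Wu \cite{FWW}, with the citation ``See Theorem~2.1 and Corollary~2.1 in \cite{FWW}'' in the lemma heading and no argument given. So there is no in-paper proof to compare your proposal against.

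That said, your outline is precisely the standard argument used in \cite{FWW}: the uniform measure on the Moran construction together with the mass distribution principle for the lower bound, and the natural level-$k$ cover for the upper bound, with the hypothesis $\inf_k c_k>0$ entering exactly where you indicate (to control the stray factor $c_{k+1}^{-s'}$). Two small cosmetic points you may want to tidy up before writing it out in full: a ball of radius $r<c_1\cdots c_k\delta$ can meet up to \emph{three} level-$k$ intervals rather than two (think of the ball centred near a common endpoint when the intervals are packed with no gaps), and the counting constant ``$3$'' in $3r/(c_1\cdots c_{k+1}\delta)$ should really be derived from the generic bound $2r/\ell+2$ for the number of disjoint length-$\ell$ intervals hit by an interval of length $2r$. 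Neither affects the structure of the argument, only the absolute constant $C(t,\delta)$ in the final estimate.
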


Next, we introduce a lemma about the invariance of Hausdorff dimension of a set under a mapping by deleting a set of positions of digit sequences of numbers with density zero in $\mathbb{N}$. Let $\mathbb{M}$ be a subset of $\mathbb{N}$ and write  
\[\mathbb{N}\backslash\mathbb{M}=\{n_1<n_2<\ldots\}.\] 
Let $D\subset I$ and denote by $\Sigma_D$ the set of digit sequences of numbers in $D$. That is, \[\Sigma_D=\{(\vep_1(x,\beta),\vep_2(x,\beta),\ldots)\in\Sigma_\beta\colon x\in D\}.\] 
Suppose there is a mapping
$\phi_\mathbb{M} \colon D\to I$ such that the corresponding digit sequences satisfy
\[(\vep_1(x,\beta),\vep_2(x,\beta),\ldots)\in\Sigma_D\mapsto(\vep_{n_1}(x,\beta),\vep_{n_2}(x,\beta),\ldots)\in\Sigma_\beta,\] 
If there exist such pair of set $D$ and mapping $\phi_\mathbb{M}$, then we call the mapping $\phi_\mathbb{M}$ \emph{maps well} on the set $D$.
Thus, given such set $D\subset I$ and mapping $\phi_\mathbb{M}$, we may obtain another set
\[\phi_\mathbb{M} (D)=\{\phi_\mathbb{M} (x)\colon x\in D\}.\] 
Moreover, we call the set $\mathbb{M} $ is of \emph{density zero} in $\mathbb{N}$ if
\[\lim_{n\to\infty}\frac{\card\{i\in\mathbb M\colon i\leq n\}}{n}=0.\]
The following lemma describes the relation between the sizes of $D$ and $\phi_\mathbb{M} (D)$.

\begin{lem}[See Lemma 5.1 in \cite{Chen}]\label{lemma density}
	Let $\beta\in A_0$, $D\subset I$ and $\mathbb{M} $ be of density zero in $\mathbb{N}$. If the mapping $\phi_\mathbb{M}$ maps well on $D$, then we have
	\[\dim_H\phi_\mathbb{M} (D)=\dim_HD.\]
\end{lem}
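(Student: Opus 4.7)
The plan is to establish both inequalities $\dim_H \phi_\mathbb{M}(D)\leq \dim_H D$ and $\dim_H \phi_\mathbb{M}(D)\geq \dim_H D$ by translating everything into basic-interval language. Two structural facts do the work: by Lemma~\ref{lemma 0 m}, the hypothesis $\beta\in A_0$ yields $\beta^{-(n+M)}\leq |I_n(\vep_1,\ldots,\vep_n)|\leq \beta^{-n}$ for a uniform $M$, so basic-interval covers compute Hausdorff measures up to bounded multiplicative constants; and writing $\mathbb{N}\setminus\mathbb{M}=\{n_1<n_2<\cdots\}$ and $k(n):=\card\{i\colon n_i\leq n\}$, the density-zero hypothesis gives $k(n)/n\to 1$ as $n\to\infty$.

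For the upper bound, I fix $t>\dim_H D$ and choose $\epsilon>0$ so small that $(1-\epsilon)t>\dim_H D$. Given $\eta>0$, cover $D$ by basic intervals $\{I_{n_j}(x_j)\}$ with orders $n_j$ large and $\sum_j |I_{n_j}(x_j)|^{(1-\epsilon)t}<\eta$. Since $\phi_\mathbb{M}(D\cap I_{n_j}(x_j))\subseteq I_{k(n_j)}(\phi_\mathbb{M}(x_j))$, the intervals $\{I_{k(n_j)}(\phi_\mathbb{M}(x_j))\}$ cover $\phi_\mathbb{M}(D)$. For $n_j$ large the bound $k(n_j)\geq (1-\epsilon)n_j$ gives $|I_{k(n_j)}|\leq \beta^{-k(n_j)}\leq C |I_{n_j}(x_j)|^{1-\epsilon}$, hence $\sum_j|I_{k(n_j)}|^t \leq C^t\eta$. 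This shows $H^t(\phi_\mathbb{M}(D))=0$, and letting $t\searrow \dim_H D$ completes this half.

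For the lower bound, I fix $s<\dim_H D$. By Frostman's lemma combined with the $A_0$-comparability of small balls and basic intervals, pick a positive finite measure $\mu$ supported on $D$ satisfying $\mu(I_n(\vep))\leq C\beta^{-ns}$ for every admissible word. Let $\nu=(\phi_\mathbb{M})_*\mu$. For $y_0\in \phi_\mathbb{M}(D)$ and $N:=n_k$, the preimage $\phi_\mathbb{M}^{-1}(I_k(y_0))$ equals the union of those admissible level-$N$ basic intervals whose digits at positions $n_1,\ldots,n_k$ coincide with those of $y_0$; the remaining $N-k$ digits, at positions in $\mathbb{M}\cap[1,N]$, are free, so the number of such words is at most $\lceil\beta\rceil^{N-k}$, giving
\[\nu(I_k(y_0))\leq \lceil\beta\rceil^{N-k}\cdot C\beta^{-Ns}.\]
Since $N/k\to 1$, for any $s'<s$ this is bounded by $C'\beta^{-ks'}$ for all large $k$, and hence by $C'' r^{s'}$ on small balls of radius $r$ via the $A_0$ bound. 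The mass distribution principle then yields $\dim_H \phi_\mathbb{M}(D)\geq s'$, and letting $s'\nearrow s\nearrow \dim_H D$ concludes.

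The main obstacle is the lower bound: one must control the combinatorial fan-out when lifting a basic interval of $\phi_\mathbb{M}(D)$ to the many level-$N$ basic intervals of $D$ projecting onto it. The crude count $\lceil\beta\rceil^{N-k}$ is tolerable only because the density-zero hypothesis forces $N-k=o(N)$, so the excess factor $\beta^{(N-k)\log_\beta \lceil\beta\rceil}$ translates into an arbitrarily small perturbation of the dimension exponent; the $A_0$ assumption is what allows both directions to be phrased cleanly in terms of basic intervals.
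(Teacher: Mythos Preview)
The paper does not supply its own proof of this lemma: it is quoted verbatim from Lemma~5.1 of \cite{Chen}, so there is nothing in the present paper to compare your argument against. That said, your outline is essentially a correct proof. The upper bound is clean: projecting a basic-interval cover of $D$ through $\phi_\mathbb{M}$ and using $k(n)\ge (1-\epsilon)n$ together with the $A_0$ length bounds of Lemma~\ref{lemma 0 m} gives exactly what you claim.

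One caveat on the lower bound: invoking Frostman's lemma presupposes that $D$ is analytic (or at least Borel), which is not part of the stated hypothesis. In the paper's applications the sets $D$ are always explicitly constructed Borel sets, so this is harmless there, but as a proof of the lemma in the generality stated it is a gap. You can sidestep this entirely by running the same combinatorics in covering form rather than measure form: start from a basic-interval cover $\{I_{k_j}(y_j)\}$ of $\phi_\mathbb{M}(D)$ with small $s'$-sum, pull each $I_{k_j}(y_j)$ back to at most $\lceil\beta\rceil^{\,n_{k_j}-k_j}$ basic intervals of order $n_{k_j}$, and observe that the resulting family covers $D$. The same asymptotic $n_k/k\to 1$ then shows that for any $t>s'$ the $t$-sum of the pulled-back cover is controlled by the $s'$-sum of the original, giving $\dim_H D\le t$; letting $t\searrow s'\searrow\dim_H\phi_\mathbb{M}(D)$ finishes without any measurability assumption on $D$. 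This is the same fan-out count you already identified as the crux, just deployed on covers instead of on a Frostman measure.
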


In what follows, we introduce the definition and dimensional result about cut set.

\begin{dfn}
	Let $F$ be a subset of $I^2$. For any number $y\in I$, define
	\[F(y)=\{x\in I\colon(x,y)\in F\}.\]
	We call $F(y)$ \emph{the cut set} of $F$ by number $y$.
\end{dfn}
For the Hausdorff dimensions of the sets $F$ and $F(y)$, we have the
following relation.

\begin{lem}[See Corollary 7.12 in~\cite{F97}]\label{CUT}
	Let $E\subset I$ and $F\subset I^2$. If 
	$\dim_HF(y)\geq t$ for any $y\in E$, then 
	\[\dim_HF\geq t+\dim_H E.\]
\end{lem}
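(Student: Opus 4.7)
My plan is a standard ``Frostman plus slicing'' argument: it suffices to show $\dim_HF\geq s+u$ for every $s<t$ and every $u<\dim_HE$, since taking suprema gives the desired inequality $\dim_HF\geq t+\dim_HE$. Because $\dim_HE>u$, Frostman's lemma supplies a Borel probability measure $\mu$ supported on $E$ and a constant $C_1>0$ such that $\mu(B)\leq C_1|B|^u$ for every bounded Borel set $B\subset I$, where $|B|$ denotes the diameter. This $\mu$ will record how much of $E$ is seen at each scale.

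Next I would argue by contradiction, assuming $\dim_HF<s+u$. Then for arbitrarily small $\varepsilon,\delta>0$ there exists a $\delta$-cover $\{U_i\}_{i\geq1}$ of $F$ with $\sum_i|U_i|^{s+u}<\varepsilon$. Let $\pi_x,\pi_y$ be the coordinate projections of $I^2$ onto the two copies of $I$. For each fixed $y\in I$, the family $\{\pi_x(U_i)\colon y\in\pi_y(U_i)\}$ forms a $\delta$-cover of the cut set $F(y)$ by subsets of $I$ of diameter at most $|U_i|$, whence
\[\mathcal{H}^s_\delta(F(y))\leq\sum_i|U_i|^s\mathbf{1}_{y\in\pi_y(U_i)}.\]
Integrating this inequality against $\mu$, interchanging sum and integral, and applying the Frostman estimate $\mu(\pi_y(U_i))\leq C_1|U_i|^u$ give
\[\int\mathcal{H}^s_\delta(F(y))\,d\mu(y)\leq C_1\sum_i|U_i|^{s+u}<C_1\varepsilon.\]

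To contradict this I need a uniform pointwise lower bound on the integrand. Since $\dim_HF(y)\geq t>s$ for every $y\in E$, the $\delta$-content $\mathcal{H}^s_\delta(F(y))$ tends to $\infty$ as $\delta\downarrow0$ for each such $y$. An Egorov-type extraction then furnishes a fixed $\delta_0>0$ and a Borel subset $E^*\subset E$ with $\mu(E^*)\geq 1/2$ on which $\mathcal{H}^s_{\delta_0}(F(y))\geq N$ uniformly. Taking $\delta\leq\delta_0$ in the integral estimate above forces $N/2\leq C_1\varepsilon$, which fails for $N$ large and $\varepsilon$ small, delivering the contradiction.

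The main technical obstacle I anticipate is the measurability and the uniformisation step: one needs $y\mapsto\mathcal{H}^s_\delta(F(y))$ to be $\mu$-measurable, and one needs to upgrade pointwise divergence to uniform largeness on a set of positive $\mu$-measure. In Falconer's treatment this is handled by first reducing to Borel $F$ (Hausdorff dimension is unaffected by passing to a $G_\delta$ hull) and then combining monotone convergence with the inner regularity of $\mu$ to produce the required $\delta_0$. Once this measurability package is in hand, the covering--integration--contradiction scheme runs without further obstruction.
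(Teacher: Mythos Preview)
The paper does not give its own proof of this lemma; it simply quotes the result from Falconer's book as Corollary~7.12. Your Frostman-measure-plus-slicing scheme is essentially the argument Falconer employs there (via his Proposition~7.9), so your proposal matches the cited source. One minor streamlining: rather than invoking an Egorov-type extraction, you can observe that your inequality $\int\mathcal{H}^s_\delta(F(y))\,d\mu(y)\leq C_1\varepsilon$ holds for every $\varepsilon>0$ and every $\delta>0$, so the integral vanishes for each fixed $\delta$; letting $\delta\downarrow0$ along a sequence then forces $\mathcal{H}^s(F(y))=0$ for $\mu$-almost every $y$, which already contradicts $\dim_HF(y)>s$ on the support of~$\mu$.
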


At last, we present a crucial result about the existence of a sequence of integers $\{l_n\}_{n\geq1}$ which is of fine prescribed increasing behavior. 

\begin{lem}[See~\cite{FW}]\label{FW}
	For any $0\leq a\leq b\leq\infty$, there
	exists an increasing sequence of integers $\{l_n\}_{n\geq1}$ such
	that
	\begin{align}
		\liminf_{n\to\infty}\frac{\log l_n}{n}=a\quad\mbox{and} \quad \limsup_{n\to\infty}\frac{\log l_n}{n}=b.
	\end{align}
	Moreover, $\{l_n\}_{n\geq1}$ increases faster
	than the growth rate of the sequence $\{[e^{\sqrt{n}}]\}_{n\geq1}$.
\end{lem}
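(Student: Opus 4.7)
The plan is to construct $\{l_n\}$ explicitly as a piecewise-defined integer sequence whose behaviour is prescribed at a sequence of landmarks $n_0<n_1<n_2<\cdots$ chosen inductively. The guiding trade-off is the following: holding $\log l_n$ essentially constant on a long plateau $[n_{2k},n_{2k+1}]$ makes $\frac{\log l_n}{n}$ decay as $n$ grows, whereas letting $\log l_n$ interpolate linearly up to $b\cdot n_{2k+2}$ on a growth block $[n_{2k+1},n_{2k+2}]$ forces $\frac{\log l_n}{n}$ back up to $b$. Alternating these two phases yields $\liminf=a$ on the subsequence $\{n_{2k+1}\}$ and $\limsup=b$ on $\{n_{2k+2}\}$.

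For $0<a<b<\infty$, I would set $n_{2k+1}=\lceil(b/a)\,n_{2k}\rceil$ and choose $n_{2k+2}$ much larger than $n_{2k+1}$, then define
\[
f(n)=\begin{cases} b\,n_{2k}, & n_{2k}\le n\le n_{2k+1},\\[2pt]
b\,n_{2k}+\dfrac{n-n_{2k+1}}{n_{2k+2}-n_{2k+1}}\bigl(b\,n_{2k+2}-b\,n_{2k}\bigr), & n_{2k+1}\le n\le n_{2k+2},\end{cases}
\]
and take $l_n=\lfloor e^{f(n)}\rfloor$, bumping to $l_{n-1}+1$ at any step where rounding breaks strict monotonicity; since $e^{f(n)}$ is exponential and the bumping adds at most $O(n)$, we still have $\log l_n=f(n)+O(1)$. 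On a plateau, $\frac{f(n)}{n}=\frac{b\,n_{2k}}{n}$ decreases from $b$ down to exactly $a$ at $n=n_{2k+1}$; on a growth block a direct computation rewrites $\frac{f(n)}{n}$ as an affine-over-affine function of $n$, which is monotonic on the block and moves from $a$ back to $b$, so all intermediate values lie in $[a,b]$. The two boundary regimes are handled by minor modifications: when $a=0$, replace the rule $n_{2k+1}=\lceil(b/a)n_{2k}\rceil$ by $n_{2k+1}=k\,n_{2k}$ so that the plateau ratio $\frac{b\,n_{2k}}{n_{2k+1}}=\frac{b}{k}\to 0$; when $b=\infty$, replace the parameter $b$ by a divergent sequence $B_k\to\infty$ in the $k$-th growth block. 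The degenerate case $a=b$ (including $a=b=0$, for which one may simply take $l_n=\lceil e^{\sqrt n\log n}\rceil$) is disposed of separately.

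The lower bound $l_n\ge\lceil e^{\sqrt n}\rceil$ is obtained by starting from a sufficiently large $n_0$: on a plateau $f(n)=b\,n_{2k}$ dominates $\sqrt{n_{2k+1}}$ provided $b^2 n_{2k}^2\ge n_{2k+1}$, which reduces to $n_{2k}\ge 1/(ab)$ in the generic regime and to $n_{2k}\ge k/b^2$ in the regime $a=0$; on growth blocks $f(n)$ only increases, so the bound is automatic there.

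The main obstacle, as I see it, is the simultaneous coordination of (i) strict monotonicity of the integer sequence after rounding, (ii) the exact equalities $\liminf=a$ and $\limsup=b$ rather than mere inequalities, and (iii) the uniform $e^{\sqrt n}$ lower bound; in the edge cases $a=0$ and $b=\infty$ the constraints tighten in opposite directions, and the delicate point is choosing the growth of the landmark sequence $(n_k)$ so that the plateau ratio $n_{2k+1}/n_{2k}$ is large enough to drive $\frac{\log l_n}{n}$ down to $a$ yet small enough that $f(n)$ still dominates $\sqrt n$ across the plateau.
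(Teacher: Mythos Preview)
The paper does not prove this lemma; it is quoted without proof from \cite{FW}, so there is no in-paper argument to compare against.

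On its own merits your construction is sound and is the standard way to manufacture a sequence with prescribed $\liminf$ and $\limsup$ of $\frac{\log l_n}{n}$. The plateau/growth-block alternation does exactly what you claim: on a plateau $\frac{f(n)}{n}=\frac{b\,n_{2k}}{n}$ decreases from $b$ to $a$, and on a growth block $\frac{f(n)}{n}$ has the form $c+\gamma/n$ with $c>b$ and $\gamma<0$, hence is monotone increasing from $a$ back to $b$, so all intermediate values lie in $[a,b]$ and the $\liminf$/$\limsup$ are attained exactly along $\{n_{2k+1}\}$ and $\{n_{2k}\}$. The integer-rounding bump adds at most the plateau length $O(n_{2k})$ to $l_n$, which is negligible against $e^{b\,n_{2k}}$, so $\log l_n=f(n)+o(1)$ as you say. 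The $e^{\sqrt n}$ floor indeed reduces to the inequality $f(n_{2k+1})\ge\sqrt{n_{2k+1}}$, and your arithmetic for this is correct.

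One small point to tighten: your edge-case treatment should make explicit the simultaneous case $a=0$, $b=\infty$, where both modifications are in play. There you must let $B_k\to\infty$ slowly enough that the plateau ratio $B_{k-1}n_{2k}/n_{2k+1}$ still tends to $0$; for instance $n_{2k+1}=k\,n_{2k}$ with $B_k=\sqrt{k}$ gives plateau ratio $\sqrt{k-1}/k\to 0$ and growth-block endpoint ratio $B_k\to\infty$, while the floor condition becomes $n_{2k}\ge k/B_{k-1}^2=k/(k-1)$, which is trivial. This is routine, but worth stating since it is precisely the ``opposite directions'' tension you flag at the end.
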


\section{Proof of Theorem \ref{theorem main theorem}}
In this section, we will give the proof of Theorem \ref{theorem main theorem}. To this end, we introduce below some notations and present some lemmas for later use. 

Let $\beta\in A_0$, $n\geq1$ and $m\geq M$. Here and in the sequel, $M$ is always given by Lemma~\ref{lemma 0 m}. Write
\[V_{n,m}^\beta=\left\{(\vep_1,\ldots,\vep_n,0^m)\colon (\vep_1,\ldots,\vep_n)\in\Sigma_\beta^n\right\}.\]
Note that we always have the fact $\vep_1\neq0$ by the definition of admissible sequence. Based on it, define a subset of the unit interval
\begin{align*}
	\mathcal{V}_{n,m}^\beta&=\left\{x\in I\colon \big(\vep_{(n+m)(i-1)+1}(x,\beta),\ldots,\vep_{(n+m)i})(x,\beta)\big)\in V_{n,m}^\beta,\forall i\geq1\right\}\\
	&=:\big(V_{n,m}^\beta\big)^\infty.
\end{align*}
That is, each digit sequence of number in $\mathcal{V}_{n,m}^\beta$ is the infinite concatenations of finite sequences of length $n+m$ in $\mathcal{V}_{n,m}^\beta$.
Next, put  
\begin{align}
	\mathcal{V}_m^\beta=\bigcup_{n\geq1}\mathcal{V}_{n,m}^\beta.
\end{align}
Then we have

\begin{lem}\label{lemma wv1}
	Let $\beta\in A_0$. For any $m\geq M$, we have 
	\[\dim_H\mathcal{V}_m^\beta=1.\] 
\end{lem}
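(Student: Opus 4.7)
The goal is $\dim_H \mathcal{V}_m^\beta = 1$. The trivial containment $\mathcal{V}_m^\beta \subset I$ already gives the upper bound, so the real content is the lower bound $\dim_H \mathcal{V}_m^\beta \ge 1$. My plan is to produce, for each fixed $n \ge 1$, a large homogeneous Moran subset of $\mathcal{V}_{n,m}^\beta$ and let $n$ tend to infinity.

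First, for fixed $n \ge 1$ and $m \ge M$, I would observe that each building block $(\vep_1,\ldots,\vep_n,0^m)$ with $(\vep_1,\ldots,\vep_n) \in \Sigma_\beta^n$ determines, by Lemma~\ref{lemma 0 m}, a full basic interval of length $\beta^{-(n+m)}$. By Corollary~\ref{corollary full word}, the concatenation of $k$ such blocks is again a full admissible word, so the corresponding basic interval $I_{k(n+m)}$ has length exactly $\beta^{-k(n+m)}$ and can be freely extended by any other such block. Consequently $\mathcal{V}_{n,m}^\beta$ is the limit set of a nested family of disjoint full basic intervals, in which each level-$(k-1)$ interval splits into $N := \operatorname{Card}\Sigma_\beta^n$ level-$k$ intervals, each with length ratio $c := \beta^{-(n+m)}$ to its parent.

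Second, this family fits the homogeneous Moran framework of Section~3 with $J = I$, $N_k \equiv N$ and $c_k \equiv c$; the condition $Nc \le 1$ holds automatically from disjointness of the level-$1$ full intervals inside $I$. Lemma~\ref{lemma FWW} then yields
\[
\dim_H \mathcal{V}_{n,m}^\beta \;\ge\; \liminf_{k\to\infty} \frac{\log N^k}{-\log\big(c^{k+1} N\big)} \;=\; \frac{\log N}{(n+m)\log\beta}.
\]
Combining this with the lower bound $N \ge \beta^n$ supplied by Theorem~\ref{theorem renyi} gives
\[
\dim_H \mathcal{V}_{n,m}^\beta \;\ge\; \frac{n}{n+m}.
\]

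Finally, since $\mathcal{V}_m^\beta \supset \mathcal{V}_{n,m}^\beta$ for every $n \ge 1$, the monotonicity of Hausdorff dimension together with $\sup_{n \ge 1} n/(n+m) = 1$ yields $\dim_H \mathcal{V}_m^\beta \ge 1$, completing the proof. The only real subtlety is the bookkeeping check that the blocks $(\vep_1,\ldots,\vep_n,0^m)$ can be concatenated without losing admissibility and that the resulting basic intervals remain full of the right length; both are immediate once $m \ge M$ and the hypothesis $\beta \in A_0$ are invoked through Lemma~\ref{lemma 0 m} and Corollary~\ref{corollary full word}, after which the dimension estimate is a standard application of Lemma~\ref{lemma FWW}.
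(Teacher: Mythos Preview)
Your proof is correct and follows essentially the same route as the paper: both identify $\mathcal{V}_{n,m}^\beta$ as a homogeneous Moran set with $N_k\equiv\card\Sigma_\beta^n$ and $c_k\equiv\beta^{-(n+m)}$, invoke Lemma~\ref{lemma FWW} together with R\'enyi's bound $\card\Sigma_\beta^n\ge\beta^n$ to obtain $\dim_H\mathcal{V}_{n,m}^\beta\ge n/(n+m)$, and then let $n\to\infty$. You supply more detail than the paper on why the blocks concatenate admissibly and remain full (via Lemma~\ref{lemma 0 m} and Corollary~\ref{corollary full word}), but the argument is the same.
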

\begin{proof}
	By Lemma \ref{theorem renyi} and Lemma \ref{lemma FWW}, we have
	\begin{align}\label{formula wnm}
		\dim_H\mathcal{V}_{n,m}^\beta\geq\liminf_{k\to\infty}\frac{k\log\beta^n}{-(k+1)\log\beta^{-(n+m)}-\log\beta^{n}}=\frac{n}{n+m}  
	\end{align}
	for $n\geq1$. Since $\mathcal{V}_{n,m}^\beta\subset\mathcal{V}_m^\beta$, we have 
	\[\dim_H\mathcal{V}_m^\beta\geq\frac{n}{n+m}\]
	for any $n\geq1$. It follows that
	$\dim_H\mathcal{V}_m^\beta=1$ by letting $n\to\infty$.	
\end{proof}

Let $0\leq a\leq b\leq\infty$ and $y\in\mathcal{V}_m^\beta$, where $\beta\in A_0$. Define the cut set of $E_\beta(a,b)$ by
\begin{align}
	E_{a,b}^\beta(y)=\left\{x\in I\colon\underline{R}^\beta(x,y)=a,\overline{R}^\beta(x,y)=b\right\}. 
\end{align}
Then we have
\begin{lem}\label{lemma faby1}
	Let $\beta\in A_0$ and $m\geq M$. For any $0\leq a\leq b\leq\infty$ and $y\in\mathcal{V}_m^\beta$, we have
	\[\dim_HE_{a,b}^\beta(y)=1.\]
\end{lem}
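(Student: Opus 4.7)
The plan is to construct, inside $E_{a,b}^\beta(y)$, an explicit Moran-like subset of Hausdorff dimension $1$; the reverse inequality is automatic since $E_{a,b}^\beta(y)\subset I$. First I would apply Lemma~\ref{FW} to extract an increasing integer sequence $\{l_n\}_{n\ge1}$ realizing the prescribed $\liminf$ and $\limsup$ and growing faster than $e^{\sqrt n}$, fix $n_0\ge1$ with $y\in\mathcal V_{n_0,m}^\beta$, and write $(\eta_1,\eta_2,\ldots)$ for the digit sequence of $y$. Because $l_n$ grows much faster than $n$, the target windows $[l_n,l_n+n-1]$ are eventually pairwise disjoint, and their union $\mathbb L\subset\mathbb N$ has density zero.

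Next I would describe a family $E^*$ of admissible $x$'s: for each sufficiently large $n$, prescribe
\[
\vep_{l_n+i-1}(x,\beta)=\eta_i,\quad 1\le i\le n,
\]
so that the length-$n$ prefix of $y$ appears in the digit sequence of $x$ starting at position $l_n$. Immediately after each such forced block, insert a padding $0^M$, which guarantees by Lemma~\ref{lemma 0 m} that any following admissible word can be concatenated without violating admissibility. In the remaining positions I would allow arbitrary admissible digits. By construction, $W_n^\beta(x,y)\le l_n$ for every $x\in E^*$, so $\overline R^\beta(x,y)\le b$ and $\underline R^\beta(x,y)\le a$.

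The main obstacle is the matching lower bounds $\overline R^\beta(x,y)\ge b$ and $\underline R^\beta(x,y)\ge a$: one must exclude accidental early occurrences of $(\eta_1,\ldots,\eta_n)$ inside the ``free'' regions of $x$. My plan is to refine $E^*$ by imposing additional restrictions on a further density-zero set of positions, namely short guard words placed immediately before each forced block at $l_n$, exploiting the block structure of $y\in\mathcal V_{n_0,m}^\beta$ (each $(n_0+m)$-block of $y$ ends in $0^m$) to force any occurrence of a sufficiently long prefix of $y$ in $x$ to coincide with a programmed placement. The outcome should be $W_n^\beta(x,y)=l_n$ for all $x$ in the refined set and all large $n$, giving $\underline R^\beta(x,y)=a$ and $\overline R^\beta(x,y)=b$.

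Finally, I would combine Lemma~\ref{lemma density}, to absorb the density-zero set of forced positions and guards, with Lemma~\ref{lemma FWW} applied to the remaining free positions, using R\'enyi's cardinality bound $\card\Sigma_\beta^k\ge\beta^k$ from Theorem~\ref{theorem renyi}. This should yield $\dim_H E^*=1$ and hence $\dim_H E_{a,b}^\beta(y)=1$.
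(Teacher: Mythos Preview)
Your outline correctly isolates the heart of the matter --- preventing accidental early copies of $(\eta_1,\ldots,\eta_n)$ inside the ``free'' portion of $x$ --- but the proposed remedy does not work. Guard words placed \emph{only} immediately before each forced block at position $l_n$ cannot stop a long prefix of $y$ from occurring deep inside a free stretch: if the digits on the complement of a density-zero set $\mathbb L$ are genuinely arbitrary admissible digits, then the admissible word $(\eta_1,\ldots,\eta_n)$ can appear anywhere in a free interval of length $\ge n$, and such intervals are abundant because the gaps of a density-zero set are unbounded. No finite collection of local guards, nor any further density-zero set of constraints, can rule this out; the free regions will always eventually contain uncontrolled windows long enough to host $(\eta_1,\ldots,\eta_n)$. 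Consequently you cannot conclude $W_n^\beta(x,y)\ge l_n$ (or anything close to it) for every $x$ in your set, and the lower bounds $\underline R^\beta(x,y)\ge a$, $\overline R^\beta(x,y)\ge b$ fail.

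The paper's device is different and is exactly the missing idea: instead of leaving the free digits arbitrary, it draws the ``background'' point $x$ from $\mathcal V_{N,p}^\beta$ with $p\ge n_0+m_0$, and then \emph{inserts} the blocks $V_{0,p,k}^\beta(y)=(0^p,\eta_1,\ldots,\eta_{k+1},0^p)$ at positions $l_k$. The point is structural: every length-$(N+p)$ block of the background contains a run $0^p$ with $p\ge n_0+m_0$, whereas the digit sequence of $y\in\mathcal V_{n_0,m_0}^\beta$ never has a zero-run of that length. Hence a sufficiently long prefix of $y$ simply cannot sit inside the background portion of $x^\ast$, and one obtains the two-sided estimate $l_{n-n_0-m_0}+p\le W_n^\beta(x^\ast,y)\le l_{n-1}+p$. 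The restriction to $\mathcal V_{N,p}^\beta$ costs dimension --- one only gets $\dim_H\ge N/(N+p)$ via Lemma~\ref{lemma FWW} and Lemma~\ref{lemma density} --- but this loss is recovered by letting $N\to\infty$ at the end. Replace your ``arbitrary admissible digits plus local guards'' step by this structured background and the rest of your plan goes through.
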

\begin{proof}
	Let $y\in\mathcal{V}_{n_0,m_0}^\beta$ for some $n_0\geq1$ and $m_0\geq M$.
	Let $N\geq1$ be an integer and fix an integer $p\geq n_0+m_0$. 
	Based on the set $\mathcal{V}_{N,p}^\beta$ and the sequence $\{l_n\}_{n\geq1}$ given in Lemma~\ref{FW}, in the following we will construct a subset, say $\mathcal{V}^\beta(\{l_n\},y)$, of $E_{a,b}^\beta(y)$.
	
	For each $x\in\mathcal{V}_{N,p}^\beta$, we first construct a number $x^*$  by induction. Write \[x^{(0)}=x=\big(\vep_1(x^{(0)},\beta),\vep_2(x^{(0)},\beta),\ldots\big).\] 
	Here, we identify $x$ with its digit sequence for convenience. Suppose we have defined
	\[x^{(j)}=\big(\vep_1(x^{(j)},\beta),\vep_2(x^{(j)},\beta),\ldots\big),\quad\text{for}\ 0\leq j\leq k,\]
	then take an integer $p\geq n_0+m_0$ and define
	\[x^{(k+1)}=\big(\vep_1(x^{(k)},\beta),\dots,\vep_{l_k}(x^{(k)},\beta), V_{0,p,k}^\beta(y),\vep_{l_k+1}(x^{(k)},\beta),\ldots\big),\]
	where
	\begin{align}\label{v0pk}
		V_{0,p,k}^\beta(y)=\big(0^p,\vep_1(y,\beta),\ldots,\vep_{k+1}(y,\beta),0^p\big).
	\end{align} 
	That is, $x^{(k+1)}$ is obtained by inserting a word $V_{0,p,k}^\beta(y)$ of length $2p+k+1$ at the position $l_k+1$ of the digit sequence of $x^{(k)}$.  
	Since the sequence	$\{l_k\}_{k\geq1}$ increases faster than the linear growth rate of the lengths of words $V_{0,p,k}^\beta(y)$, $k\geq0$, we can require $k_0$ to be large enough so
	that $V_{0,p,k}^\beta(y)$ and $V_{0,p,k+1}^\beta(y)$ can not be overlapped when $k\geq k_0$. Thus,
	the prefix word 
	\[\big(\vep_1(x^{(k-1)},\beta),\ldots \vep_{l_{k-1}}(x^{(k-1)},\beta),V_{0,p,k}^\beta(y)\big)\] 
	of $x^{(k+1)}$ is also the prefix word of $x^{(k+2)}$. This implies
	that $\{x^{(k)}\}_{k\geq 0}$ is a Cauchy sequence. Denote by \[x^*=\big(\vep_1(x^*,\beta),\vep_2(x^*,\beta),\ldots\big)\]
	the corresponding limit point of the sequence $\{x^{(k)}\}_{k\geq 0}$ and $\mathcal{V}_{N,p,n_0,m_0}^\beta(\{l_n\},y)$ the collection of all these $x^*$'s. Here and in what follows, $\mathcal{V}_{N,p,n_0,m_0}^\beta(\{l_n\},y)$ is written as $\mathcal{V}^\beta(\{l_n\},y)$ for short. That is,
	\begin{align*}
		\mathcal{V}^\beta(\{l_n\},y)=\left\{x^*\in I\colon \text{constructed from}\ x\in\mathcal{V}_{N,p}^\beta,\ y\in\mathcal{V}_{n_0,m_0}^\beta\ \text{and}\ \{l_n\}_{n\geq1}\right\}.
	\end{align*}  
	
	Note that the length of the consecutive 0's in the digit sequence of $y$ is at most $n_0+m_0-1$, which is strictly less than that of numbers in $\mathcal{V}_{N,p}^\beta$ according to the facts $p\geq n_0+m_0$ and $\vep_{(n_0+m_0)i+1}(y,\beta)\neq0$ for all $i\geq0$. Moreover, for $x^\ast\in\mathcal{V}_{N,p}^\beta(\{l_n\},y)$, we can easily check that
	\begin{align}\label{inequality lnp}
		l_{n-n_0-m_0}+p\leq W_n^\beta(x^\ast,y)\leq l_{n-1}+p
	\end{align}
	for sufficiently large $n$. In fact, the right side of inequality \eqref{inequality lnp} is obvious and the left side holds if the equation 
	\[\big(\vep_{n-(n_0+m_0-2)}(y,\beta),\ldots,\vep_{n}(y,\beta)\big)=0^{n_0+m_0-1}\]
	would be possible. In this situation, we could find a copy of $\big(\vep_1(y,\beta),\ldots,\vep_n(y,\beta)\big)$ at the position $l_{n-1-(n_0+m_0)-1}+p=l_{n-n_0-m_0}+p$ in the digit sequence of $x^*$ when $n$ is large enough. However, according to the characters of the values of digits of $x$ and $y$, we can not find this copy before this position.	
	
	Thus, by Lemma~\ref{FW}, we have
	\[\underline{R}^\beta(x^\ast,y)=\liminf_{n\to\infty}\frac{\log
		W_n^\beta(x^\ast,y)}{n}\leq\liminf_{n\to\infty}\Big(\frac{\log (l_{n-1}+p)}{n-1}\cdot\frac{n-1}{n}\Big)=a\] 
	and 
	\[\underline{R}^\beta(x^\ast,y)\geq\liminf_{n\to\infty}\Big(\frac{\log (l_{n-n_0-m_0}+p)}{n-n_0-m_0}\cdot\frac{n-n_0-m_0}{n}\Big)=a,\]
	which imply that $\underline{R}^\beta(x^\ast,y)=a$. Similarly, we may also deduce that
	$\overline{R}^\beta(x^\ast,y)=b$.
	Thus, we obtain $x^\ast\in E_{a,b}^\beta(y)$ and then the relation $\mathcal{V}^\beta(\{l_n\},y)\subset E_{a,b}^\beta(y)$. It follows that 
	\begin{equation}\label{S_N1}
	\dim_H\mathcal{V}^\beta(\{l_n\},y)\leq\dim_HE_{a,b}^\beta(y).
	\end{equation}
	
	Define a mapping $\phi_{\{l_n\}}\colon \mathcal{V}^\beta(\{l_n\},y)\to \mathcal{V}_{N,p}^\beta$ by $x^\ast\mapsto
	x$. Then $\phi_{\{l_n\}}$ maps well on $\mathcal{V}^\beta(\{l_n\},y)$. Moreover, according to Lemma~\ref{FW}, we have that
	\begin{align*}
		&\limsup_{n\to\infty}\frac{\card\left\{i\leq n\colon x_i^\ast\ \mbox{appears in some word}\ V_{0,p,k}^\beta(y)\right\}}{n}\\
		&\leq\lim_{k\to\infty}\frac{\sum_{j=0}^k(2p+j+1)}{l_k+2p+k+1}=0.
	\end{align*}
	This implies that the set of positions occupied by the sequence of words $V_{0,p,k}^\beta(y)$, $k\geq0$, in each digit sequence of number in $\mathcal{V}^\beta(\{l_n\},y)$ is of density zero in $\mathbb{N}$. Thus,
	by Lemma~\ref{lemma density} and the inequality \eqref{formula wnm}, we gain that
	\begin{equation}\label{S_N2}
	\dim_H\mathcal{V}^\beta(\{l_n\},y)=\dim_H\phi_{\{l_n\}}\big(\mathcal{V}^\beta(\{l_n\},y)\big)=\dim_H\mathcal{V}_{N,p}^\beta\geq\frac{N}{N+p}.
	\end{equation}
	
	On combining~(\ref{S_N1}) and~(\ref{S_N2}), it follows that
	\[\dim_HE_{a,b}^\beta(y)\geq\frac{N}{N+p}.\] 
	Then, by letting $N\to\infty$, we finish the proof.	
\end{proof}

\begin{cor}\label{corollary ab2}
	Let $\beta\in A_0$. For any $0\leq a\leq b\leq\infty$, we have
	\[\dim_HE_\beta(a,b)=2.\]
\end{cor}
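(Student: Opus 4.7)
The plan is to obtain the lower bound $\dim_H E_\beta(a,b) \ge 2$ directly by feeding Lemmas~\ref{lemma wv1} and~\ref{lemma faby1} into the cut-set Lemma~\ref{CUT}; the matching upper bound is trivial since $E_\beta(a,b) \subset I^2$.

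More concretely, fix $m \ge M$ and set $F = E_\beta(a,b)$ and $E = \mathcal{V}_m^\beta$. First I would observe that, unwinding the definition of the cut set, for every $y \in I$ we have
\[
F(y) = \{x \in I \colon (x,y) \in E_\beta(a,b)\} = E_{a,b}^\beta(y),
\]
so Lemma~\ref{lemma faby1} says precisely that $\dim_H F(y) = 1$ for every $y$ in the slice-parameter set $E = \mathcal{V}_m^\beta$. By Lemma~\ref{lemma wv1}, $\dim_H E = 1$. Applying Lemma~\ref{CUT} with $t = 1$ yields
\[
\dim_H E_\beta(a,b) \ge 1 + \dim_H \mathcal{V}_m^\beta = 2.
\]
On the other hand, $E_\beta(a,b) \subset I^2$ forces $\dim_H E_\beta(a,b) \le 2$, so equality holds.

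There is essentially no obstacle here: the two preceding lemmas were designed exactly so that the cut-set inequality in Falconer can be invoked. The only point requiring minor care is verifying that Lemma~\ref{lemma faby1} applies uniformly to every $y$ in the target $E$; but this is automatic because Lemma~\ref{lemma faby1} is stated for arbitrary $y \in \mathcal{V}_m^\beta$. Note also that this argument establishes the corollary only for $\beta \in A_0$; the extension to general $\beta > 1$ is presumably handled in the indicated approximation section using Theorem~\ref{theorem parry}(2) together with the density of $A_0$ in $(1,\infty)$ from Lemma~\ref{lemma A zero}, but that step lies outside the scope of the stated corollary.
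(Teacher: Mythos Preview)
Your proof is correct and follows essentially the same approach as the paper: apply the cut-set Lemma~\ref{CUT} with slice-parameter set $\mathcal{V}_m^\beta$, using Lemma~\ref{lemma faby1} for the slice dimension and Lemma~\ref{lemma wv1} for the base dimension. The paper phrases it via the intermediate subset $E_\beta^m(a,b)=E_\beta(a,b)\cap(I\times\mathcal{V}_m^\beta)$ before invoking the inclusion, whereas you apply Lemma~\ref{CUT} directly to $F=E_\beta(a,b)$; both are equivalent.
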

\begin{proof}
	Let $m\geq M$. Define 
	\begin{align}
		E_\beta^m(a,b)=\left\{(x,y)\in I\times\mathcal{V}_m^\beta\colon\underline{R}^\beta(x,y)=a,\overline{R}^\beta(x,y)=b\right\} 
	\end{align}
	where $0\leq a\leq b\leq\infty$. Then the cut set of $E_\beta^m(a,b)$ by $y$ is just $E_{a,b}^\beta(y)$, which satisfies that 
	\[\dim_HE_{a,b}^\beta(y)=1 \]
	for any $y\in\mathcal{V}_m^\beta$ by Lemma~\ref{lemma faby1}. Thus, we have
	\[\dim_HE_\beta^m(a,b)\geq 1+\dim_H\mathcal{V}_m^\beta=2\]
	by Lemma \ref{CUT}.	This, together with the apparent fact
	$E_\beta^m(a,b)\subset E_\beta(a,b)$, finishes the proof. 
\end{proof}

Now, we are ready to give the proof of Theorem \ref{theorem main theorem}, in which the Schmeling's approximation method is applied, one can refer to \cite{TW} for more details.  

\begin{proof}[Proof of Theorem \ref{theorem main theorem}]

	Let $(\vep_1^*(1,\beta),\vep_2^*(1,\beta),\ldots)$ be the infinite $\beta$-expansion of 1 given in the beginning of Section 2. Assume $\beta_N=\beta_N(\beta)$ is the unique positive root of the equation
	\begin{align}\label{definition beta m}
		1=\frac{\vep_1^*(1,\beta)}{\beta_N^1}+\frac{\vep_2^*(1,\beta)}{\beta_N^2}+\cdots+\frac{\vep_N^*(1,\beta)}{\beta_N^N},\quad\text{where}\  \vep_N^*(1,\beta)\geq1.
	\end{align}
    All these subscripts of the roots $\beta_N$'s form a subset of $\mathbb{N}$ or a sequence, named $\{N_i\}_{i\geq1}$.
    
	Write $A_1(\beta)$ as the collection of all the roots of \eqref{definition beta m}, i.e.,
	\[A_1(\beta)=\big\{\beta_{N_i}\colon\text{the root of \eqref{definition beta m} for each}\ i\geq1\big\}.\]
	Since the digit sequence of expansion of 1 under base $\beta_{N_i}$ is the ${N_i}$-periodic sequence \[\big(\vep_1^\ast(1,\beta),\ldots,\vep_{N_i-1}^\ast(1,\beta),\vep_{N_i}^\ast(1,\beta)-1\big)^\infty\]  
	for every $i\geq1$, we have $A_1(\beta)\setminus\{1\}\subset A_0$. 
	Besides, we also have $\beta_{N_i}<\beta$ and $\beta_{N_i}$ increases to $\beta$ as $i\to\infty$. Moreover, by (2) in Theorem \ref{theorem parry}  the relation 
	\[\Sigma_{\beta_{N_i}}\subset\Sigma_{\beta_{N_j}}\subset\Sigma_\beta\]
	holds if $N_i<N_j$. 
	
	Then, denote
	\[A_1=\bigcup_{\beta>1}\big(A_1(\beta)\setminus\{1\}\big).\]
	It is clear that $A_1\subset A_0$. Also, $A_1$ is dense in the parameter space $(1,\infty)$ since $\beta_{N_i}(\beta)\to\beta$ as $i\to\infty$ for any $\beta\in(1,\infty)$.
	
	Define a mapping $\pi_\beta\colon S_\beta\to I$ which satisfies
	\begin{align}\label{definition pi}
		\pi_\beta(\vep)=\sum_{i=1}^{\infty}\frac{\vep_i}{\beta^i},\quad\vep=(\vep_1,\vep_2,\ldots)\in S_\beta.
	\end{align}
	It is easy to see that the mapping $\pi_\beta$ is a one-to-one mapping for all but countable many digit sequences. Moreover, $\pi_\beta$ is continuous on $S_\beta$ and satisfies 
	\[\pi_\beta\circ\sigma=T_\beta\circ\pi_\beta. \]
		
	For each $i$ such that $\beta_{N_i}>1$, write $D_{\beta,\beta_{N_i}}=\pi_\beta(\Sigma_{\beta_{N_i}})$ and define the mapping $g\colon D_{\beta,\beta_{N_i}}^2\to I^2$ which satisfies
	\[g(x,y)=\big(\pi_{\beta_{N_i}}(\vep(x,\beta)),\pi_{\beta_{N_i}}(\vep(y,\beta))\big),\quad x,y\in D_{\beta,\beta_{N_i}}.\]
	Then we have the following properties:
	\begin{enumerate}
		\item $\vep(g_1(x,y),\beta_{N_i})=\vep(x,\beta)$ and $\vep(g_2(x,y),\beta_{N_i})=\vep(y,\beta)$, where $g_1(x,y)=\pi_{\beta_{N_i}}(\vep(x,\beta))$ and $g_2(x,y)=\pi_{\beta_{N_i}}(\vep(y,\beta))$;
		
		\item $g\big(E_\beta(a,b)\cap D_{\beta,\beta_{N_i}}^2\big)=E_{\beta_{N_i}}(a.b)$;
		
		\item the function $g$ is $(\log\beta_{N_i}/\log\beta)$-Lipschitz on $D_{\beta,\beta_{N_i}}^2$.
	\end{enumerate}
	Here, the first two properties are obvious and the last property is followed by the elementary inequality
	\[\frac{x_1^t+x_2^t}{2}\leq\Big(\frac{x_1+x_2}{2}\Big)^t,\quad \text{where}\ x_1,x_2>0\ \text{and}\ 0<t<1,\]	
	and Theorem 3.1 in \cite{BL} which indicates that the functions $g_1$ and $g_2$ are both $(\log\beta_{N_i}/\log\beta)$-Lipschitz on $D_{\beta,\beta_{N_i}}$. 
	
	Thus, according to the above properties, we have 
	\[\dim_HE_\beta(a,b)\geq\dim_H\big(E_\beta(a,b)\cap D_{\beta,\beta_{N_i}}^2\big)\geq\frac{\log\beta_{N_i}}{\log\beta}\dim_HE_{\beta_{N_i}}(a,b).\]
	By Corollary \ref{corollary ab2} and the fact $\beta_{N_i}\in A_1\subset A_0$, it yields that \[\dim_HE_\beta(a,b)\geq\frac{\log\beta_{N_i}}{\log\beta}\times2.\]
	Let $i\to\infty$, then $\beta_{N_i}\to\beta$. It follows that $\dim_HE_\beta(a,b)=2$. The proof is ended now.
\end{proof}

\section{Two generalizations}

In this section, we will give two generalizations to Theorem \ref{theorem main theorem} from different directions. One is to consider the accumulation points of the sequence of numbers $\big\{\frac{1}{n}\log W_n^\beta(x,y)\big\}_{n\geq1}$, the other is to consider some general growth speeds of the waiting times of $I_n(y)$ by $x$.

Let $x,y\in I$. Denote by $A\big(\frac{1}{n}\log W_n^\beta(x,y)\big)$
the set of all accumulation points of the sequence
$\big\{\frac{1}{n}\log W_n^\beta(x,y)\big\}_{n\geq1}$. Let $J$ be an
interval in $(0,\infty)$. Define the planar set
\[E^\beta(J)=\Big\{(x,y)\in I^2\colon A\Big(\frac{\log W_n(x,y)}{n}\Big)=J\Big\}.\]
Then we have

\begin{thm}
	Let $\beta>1$. If the interval $J$ is closed, then $\dim_HE^\beta(J)=2$.
\end{thm}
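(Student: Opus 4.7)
The plan is to imitate the proof of Theorem \ref{theorem main theorem} step by step, with Lemma \ref{FW} upgraded to produce a sequence whose entire accumulation set — not merely its liminf and limsup — is the prescribed interval. Write $J = [a,b]$ with $0 < a \leq b < \infty$ (the case $a = b$ is already Theorem \ref{theorem main theorem}). The key combinatorial step is to produce an increasing integer sequence $\{l_n\}_{n \geq 1}$ growing faster than $\{\lfloor e^{\sqrt n}\rfloor\}$ and satisfying both
\[ A\Big(\tfrac{\log l_n}{n}\Big) = J \qquad \text{and} \qquad \Big|\tfrac{\log l_{n+1}}{n+1} - \tfrac{\log l_n}{n}\Big| \to 0. \]
One such construction alternates targets $a$ and $b$ on windows $[n_{k-1}+1, n_k]$: on each window begin with $l_n$ at the value carried over from the previous window and let it grow as $\lceil e^{t_k n}\rceil$, or, if the new target $t_k$ is below the previous one, grow $l_n$ by only $+1$ per step until it catches up to $\lceil e^{t_k n}\rceil$. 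Since $\log l_n/n$ then sweeps continuously from $t_{k-1}$ to $t_k$ during every transition (with vanishing increments), the accumulation set is automatically the connected hull $[a,b] = J$.

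For $\beta \in A_0$ and $y \in \mathcal{V}_m^\beta$, I would then copy the construction in the proof of Lemma \ref{lemma faby1} verbatim, plugging in this enhanced $\{l_n\}$: build $x^*$ from a seed $x \in \mathcal{V}_{N,p}^\beta$ by inserting $V_{0,p,k}^\beta(y)$ at position $l_k+1$. The two-sided estimate
\[ l_{n-n_0-m_0} + p \leq W_n^\beta(x^*, y) \leq l_{n-1} + p \]
holds for the same combinatorial reasons as in Lemma \ref{lemma faby1}. Because the slow-oscillation condition yields $\tfrac{1}{n}\bigl(\log l_{n-1} - \log l_{n-n_0-m_0}\bigr) \to 0$, the sandwiched middle term has the same accumulation set as $\tfrac{1}{n}\log l_n$, namely $J$, so $\mathcal{V}^\beta(\{l_n\}, y)$ is contained in
\[ E^\beta(J)(y) := \Big\{x \in I : A\Big(\tfrac{1}{n}\log W_n^\beta(x, y)\Big) = J\Big\}. \]
The density-zero argument via Lemma \ref{lemma density} then yields $\dim_H E^\beta(J)(y) \geq N/(N+p)$, and letting $N \to \infty$ gives $\dim_H E^\beta(J)(y) = 1$.

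Applying Lemma \ref{CUT} together with $\dim_H \mathcal{V}_m^\beta = 1$ from Lemma \ref{lemma wv1} then upgrades this to $\dim_H E^\beta(J) \geq 2$ for every $\beta \in A_0$. For general $\beta > 1$, the Schmeling approximation at the end of the proof of Theorem \ref{theorem main theorem} transfers the bound: picking $\beta_{N_i} \in A_1 \subset A_0$ with $\beta_{N_i} \nearrow \beta$, the Lipschitz embedding $g$ preserves digit sequences and therefore preserves waiting times, so $g\bigl(E^\beta(J) \cap D_{\beta, \beta_{N_i}}^2\bigr) = E^{\beta_{N_i}}(J)$; the Lipschitz--Hausdorff estimate then gives $\dim_H E^\beta(J) \geq (2\log \beta_{N_i})/\log \beta \to 2$. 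I expect the only genuinely new ingredient to be the combinatorial construction of $\{l_n\}$ — in particular, interpolating in strictly increasing integers between windows with a lower target while preserving vanishing increments of $\log l_n/n$; everything else is a routine adaptation of the machinery already set up in Sections 3 and 4.
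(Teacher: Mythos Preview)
Your overall strategy matches the paper's exactly: prove $\dim_H E_J^\beta(y)=1$ for $\beta\in A_0$ and $y\in\mathcal V_m^\beta$ via the same insertion construction as in Lemma~\ref{lemma faby1}, then apply Lemma~\ref{CUT} and the Schmeling approximation. The only substantive difference is how the auxiliary sequence is built. The paper does not alternate between $a$ and $b$; instead it picks a sequence $\{t_n\}\subset J$ dense in $J$ with $|t_{n+1}-t_n|\le a/n$ and sets $r_n=\big[e^{n t_n+\sqrt n}\big]$. This gives $\tfrac1n\log r_n - t_n\to 0$ directly, so the accumulation set is $\overline{\{t_n\}}=J$, and the $\sqrt n$ in the exponent forces $r_{n+1}-r_n>2n$ for large $n$, which simultaneously handles both the non-overlap of consecutive inserted blocks and the density-zero estimate.

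Your construction has one concrete snag at exactly the point you flagged as delicate. During a catch-down window you let $l_{n+1}-l_n=1$, but the block $V_{0,p,k}^\beta(y)$ you insert at position $l_k+1$ has length $2p+k+1$, so the next insertion at position $l_{k+1}+1=l_k+2$ lands inside the block just placed; the Cauchy-sequence argument for $x^*$ then fails, and the two-sided waiting-time estimate no longer holds as stated. The repair is easy (grow by something super-linear in $k$, e.g.\ $l_{k+1}-l_k\ge 3k$, which still drives $\tfrac1n\log l_n$ down to the lower target), but as written this step does not go through. The paper's explicit formula avoids the issue entirely by building the gap into the exponent.
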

\begin{proof}
    The proof is similar to that of Theorem~\ref{theorem main theorem}, we will only give the outline in the following. Firstly, define the cut set of $E^\beta(J)$ as
    \[E_J^\beta(y)=\left\{x\in I\colon (x,y)\in E^\beta(J)\right\},\quad\text{where}\ y\in\mathcal{V}_m^\beta,\ m\geq M.\] 
    Then prove that
    \begin{align}
    	\dim_HE_J^\beta(y)=1,\quad\text{for any}\ \beta\in A_0.
    \end{align}
   Secondly, we may deduce that $\dim_HE^\beta(J)=2$ for any $\beta\in A_0$ by Lemma \ref{CUT}. At last, by using the Schmeling's approximation method, we can obtain the desired result.
   
   In the above processes, we only need to make an explanation to the first step while the proofs of the other two steps are natural and routine as before. Let $J=[a,b]$. Choose a sequence $\{t_n\}_{n\geq1}$ contained in $J$
   such that $\{t_n,n\geq1\}$ is dense in $J$ and
   $|t_{n+1}-t_n|\leq a/n$. Define
   \begin{align}\label{formula rn}
       r_n=\big[e^{nt_n+\sqrt{n}}\big] 
   \end{align} 
   and construct, based on the set $\mathcal{V}_{N,p}$, a set $\mathcal{V}^\beta(\{r_n\},y)$ which is the collection of limit points $x^\ast$'s of the sequence $\{x^{(n)}\}_{n\geq1}$ given by
   \begin{align}\label{equality xn}
   	   x^{(n+1)}=\big(\vep_1(x^{(n)},\beta),\dots,\vep_{r_{n}}(x^{(n)},\beta), V_{0,p,n}^\beta(y),\vep_{r_{n}+1}(x^{(n)},\beta),\ldots\big),
   \end{align} 
   where $V_{0,p,n}^\beta(y)$ is defined as \eqref{v0pk}. It is easy to deduce from \eqref{formula rn} that \[\lim_{n\to\infty}\frac{r_n}{n^2}=\infty\quad\text{and}\quad  r_{n+1}-r_n>2n\] 
   for sufficiently large $n$.    
   It leads to the conclusion that the set of positions occupied by the sequence $\big\{V_{0,p,n}^\beta(y)\big\}_{n\geq1}$ in the digit sequence of $x^\ast$ is of density zero in $\mathbb{N}$. Thus, we have
   \begin{align}\label{accumulation 1}
       \dim_H\mathcal{V}^\beta(\{r_n\},y)=\dim_H\mathcal{V}_{N,p}^\beta\geq\frac{N}{N+p}
   \end{align}
   by Lemma \ref{lemma density} and \eqref{formula wnm}. 
   
   Moreover, by the definition \eqref{formula rn} of $r_n$ we can also deduce that 
   \[\lim_{n\to\infty}\Big(\frac{1}{n}\log (r_{n+M(n)}+p)-t_n\Big)=0\] 
   if there exist two numbers $M_1$ and $M_2$ such that $M_1\leq M(n)\leq M_2$, which yields that
   $A\big(\frac{1}{n}\log (r_{n+M(n)}+p)\big)=J$ since $\{t_n,n\geq1\}$ is dense in $J$. On the other hand, we can see from \eqref{equality xn} that \[r_{n-n_0-m_0}+p\leq W_n^\beta(x^\ast,y)\leq r_{n-1}+p\]   
   for $n$ large enough. Thus, we have
   \[A\Big(\frac{1}{n}\log W_n^\beta(x^\ast,y)\Big)=J.\] 
   It follows that $\mathcal{V}^\beta(\{r_n\},y)\subset E_J^\beta(y)$ and then
   \begin{align}\label{accumulation 2}
   	    \dim_HE_J^\beta(y)\geq\dim_H\mathcal{V}^\beta(\{r_n\},y).
   \end{align}

   On combining \eqref{accumulation 1} and \eqref{accumulation 2}, we have that $\dim_HE_J^\beta(y)\geq N/(N+p)$. Then, the proof is finished by letting $N\to\infty$.   
\end{proof}

Next, we consider other general speeds of approaches of the waiting times of basic interval $I_n(y)$ by $x$. For this, define the \emph{lower and upper $\varphi$-quantitative waiting time indicators} of $y$ by $x$ in $\beta$-expansion respectively as
\begin{align}
	\underline{R}_\varphi^\beta(x,y)=\liminf_{n\to\infty}
	\frac{\log W_n^\beta(x,y)}{\varphi(n)}\quad\text{and}\quad
	\overline{R}_\varphi^\beta(x,y)=\limsup_{n\to\infty}
	\frac{\log W_n^\beta(x,y)}{\varphi(n)},
\end{align}
where $\varphi$ is a positive function defined on $\mathbb{N}$. Then, define the planar level set 
\[E_\beta^\varphi(a,b)=\left\{(x,y)\in I^2\colon \underline{R}_\varphi^\beta(x,y)=
a,\overline{R}_\varphi^\beta(x,y)=b\right\},\quad\text{where}\ 0\leq a\leq b\leq\infty.\]
For the size of $E_\beta^\varphi(a,b)$, we have
\begin{pro}\label{thoerem varphi 0}
	Let $\beta>1$. If the function $\varphi$ satisfies the condition
	\begin{align}\label{varphi n}
		\lim_{n\to\infty}n\big(\varphi(n+1)-\varphi(n)\big)=\infty,
	\end{align}
	then for any $0\leq a\leq b\leq\infty$ we have   \[\dim_HE_\beta^\varphi(a,b)=2.\]
\end{pro}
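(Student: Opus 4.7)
The plan is to follow the same three-step blueprint as the proof of Theorem \ref{theorem main theorem}: first establish the result for $\beta\in A_0$; then invoke Lemma \ref{CUT} together with $\dim_H\mathcal{V}_m^\beta=1$ (Lemma \ref{lemma wv1}) to lift a dimension-one cut-set estimate into the dimension-two bound for the planar set; and finally push from $A_0$ to general $\beta>1$ via the Schmeling approximation at the end of Section~4. The last step is insensitive to which indicator is being used, so it transfers verbatim once the $A_0$ case is in hand. Thus only the cut-set dimension estimate requires real modification, i.e., a $\varphi$-version of Lemma \ref{lemma faby1}.

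The key new ingredient is a $\varphi$-analog of Lemma \ref{FW}: for any $0\le a\le b\le\infty$, there is an increasing sequence of positive integers $\{l_n\}_{n\ge1}$ with
\[
\liminf_n\frac{\log l_n}{\varphi(n)}=a\qquad\text{and}\qquad\limsup_n\frac{\log l_n}{\varphi(n)}=b,
\]
growing super-polynomially. The hypothesis \eqref{varphi n} is what unlocks this: summing $\varphi(n+1)-\varphi(n)\ge M/n$ (which holds eventually for every $M>0$) yields $\varphi(n)/\log n\to\infty$, so the standard oscillating recipe of Lemma \ref{FW} targeting values close to $\lfloor e^{a\varphi(n)}\rfloor$ and $\lfloor e^{b\varphi(n)}\rfloor$ (with the endpoint cases $a=0$ and $b=\infty$ treated as in Lemma \ref{FW}) simultaneously delivers the correct $\liminf/\limsup$ and a growth rate faster than any polynomial. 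The same bound also provides the regularity $\varphi(n-k)/\varphi(n)\to1$ for every fixed $k$, which is what one needs to convert the sandwich $l_{n-n_0-m_0}+p\le W_n^\beta(x^\ast,y)\le l_{n-1}+p$ into the identities $\underline R_\varphi^\beta(x^\ast,y)=a$ and $\overline R_\varphi^\beta(x^\ast,y)=b$.

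Once such $\{l_n\}$ is in hand, the remainder of the argument is a direct translation of Lemma \ref{lemma faby1}. Fix $y\in\mathcal{V}_{n_0,m_0}^\beta$, integers $N\ge1$ and $p\ge n_0+m_0$, and replicate the Moran-style construction there by inserting the blocks $V_{0,p,k}^\beta(y)$ at positions $l_k+1$ inside the digit sequences of elements of $\mathcal{V}_{N,p}^\beta$, obtaining a subset
\[
\mathcal{V}^\beta(\{l_n\},y)\subset E_{a,b}^{\beta,\varphi}(y):=\left\{x\in I\colon\underline R_\varphi^\beta(x,y)=a,\ \overline R_\varphi^\beta(x,y)=b\right\}.
\]
Super-polynomial growth of $l_n$ keeps the set of inserted positions of density zero in $\mathbb{N}$, so Lemma \ref{lemma density} combined with the estimate \eqref{formula wnm} gives $\dim_H E_{a,b}^{\beta,\varphi}(y)\ge N/(N+p)$; letting $N\to\infty$ forces the cut set to have dimension $1$, Lemma \ref{CUT} then lifts this to $\dim_H E_\beta^\varphi(a,b)=2$ for $\beta\in A_0$, and Schmeling's approximation concludes. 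The main obstacle is the construction of $\{l_n\}$ and the verification of the regularity comparison $\varphi(n-k)/\varphi(n)\to1$ — both being precisely where the technical condition \eqref{varphi n} enters — while everything else is a routine re-run of Section~4.
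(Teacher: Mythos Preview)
Your three–step blueprint (cut-set estimate for $\beta\in A_0$, lift via Lemma~\ref{CUT}, then Schmeling approximation) is exactly the paper's route, and the paper likewise reduces everything to producing a suitable sequence $\{u_n\}$ of insertion positions, deferring to \cite{CT} for its construction and recording only the growth conditions $u_n/n^2\to\infty$ and $u_{n+1}\ge u_n+2n$ needed for the density-zero step.

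There is, however, a genuine error in the one place you identify as ``the main obstacle''. You claim that \eqref{varphi n} forces $\varphi(n-k)/\varphi(n)\to1$ for every fixed $k$, and you use this to pass from the sandwich
\[
l_{n-n_0-m_0}+p\le W_n^\beta(x^\ast,y)\le l_{n-1}+p
\]
to $\underline R_\varphi^\beta(x^\ast,y)=a$ and $\overline R_\varphi^\beta(x^\ast,y)=b$. This implication is false: take $\varphi(n)=t^n$ with $t>1$, one of the paper's own examples in the corollary following the proposition. Then $n(\varphi(n+1)-\varphi(n))=n(t-1)t^n\to\infty$, so \eqref{varphi n} holds, yet $\varphi(n-k)/\varphi(n)=t^{-k}\not\to1$. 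With $\{l_n\}$ built so that $\liminf_n\log l_n/\varphi(n)=a$, the sandwich then only gives
\[
t^{-(n_0+m_0)}a\ \le\ \underline R_\varphi^\beta(x^\ast,y)\ \le\ t^{-1}a,
\]
which does not pin down the value (and similarly for the $\limsup$). The remedy is not to squeeze more regularity out of $\varphi$ but to tailor the insertion positions to the bounded, $y$-dependent index shift $n\mapsto n-z(n)-1$ coming from the trailing-zero structure of $y$, so that $\log u_{k(n)}/\varphi(n)$ itself has the prescribed $\liminf$ and $\limsup$; the paper does not write this out either, pointing instead to the proof of Theorem~4.2 in \cite{CT}. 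So your plan is right, but the justification you give for the inclusion $\mathcal{V}^\beta(\{l_n\},y)\subset E_{a,b}^{\beta,\varphi}(y)$ fails as stated.
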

\begin{proof}
	The technique of this proof is similar to that of Theorem \ref{theorem main theorem} as well. So, in the following we would like to only give the main steps. One can see more details, especially for the construction and properties of the following sequence $\{u_n\}_{n\geq1}$, in the proof of Theorem 4.2 in \cite{CT}.
	
	First, fix $\beta\in A_0$. Note that by the condition~\eqref{varphi n}, we can find a sequence $\{u_n\}_{n\geq1}$ satisfying
	\begin{align}
		\liminf_{n\to\infty}
		\frac{\log u_n}{\varphi(n)}=a\quad\text{and}\quad
		\limsup_{n\to\infty}\frac{\log u_n}{\varphi(n)}=b.
	\end{align}	
	for each given pair of $a$ and $b$. Next, based on the sequence $\{u_n\}_{n\geq1}$, similar to the relation \eqref{S_N1}, we can construct a subset $\mathcal{V}^\beta (\{u_n\},y)$ of the unit interval which satisfies $\mathcal{V}^\beta (\{u_n\},y)\subset E_{a,b}^{\beta,\varphi}(y)$, where $y\in\mathcal{V}_m^\beta$ and $E_{a,b}^{\beta,\varphi}(y)$ is the cut set of $E_\beta^\varphi(a,b)$. It follows that
	\begin{equation}\label{equation ln1}
	  \dim_HE_{a,b}^{\beta,\varphi}(y)\geq\dim_H\mathcal{V}^\beta (\{u_n\},y).
	\end{equation}	
	
	Moreover, we can even choose the sequence $\{u_n\}_{n\geq1}$ satisfying
	\begin{align} 
		\lim_{n\to\infty}\frac{u_n}{n^2}=\infty\ \ \mbox{and}\ \ u_{n+1}\geq
		u_n+2n
	\end{align}
	for sufficiently large $n$. This condition, together with Lemma \ref{lemma density}, enables us to show that 
	\begin{align}\label{equation ln2}
		\dim_H\mathcal{V}^\beta (\{u_n\},y)=\dim_H\mathcal{V}_{N,p}^\beta\geq\frac{N}{N+p}.
	\end{align} 
	On combing \eqref{equation ln1} and \eqref{equation ln2}, it yields that
	$\dim_HE_{a,b}^{\beta,\varphi}(y)=1$ by letting $N\to\infty$. 
	
	Besides, similar to the proof of Corollary \ref{corollary ab2}, for any $\beta\in A_0$ and $0\leq a\leq b\leq\infty$ we can obtain that $\dim_HE_\beta^\varphi(a,b)=2$ by Lemma~\ref{CUT}.
	
	At last, by the step-by-step application of the approximation method as being used in the proof of Theorem \ref{theorem main theorem}, we can achieve the result $\dim_HE_\beta^\varphi(a,b)=2$.
\end{proof}

Note that we can substitute the condition \eqref{varphi n} in Theorem~\ref{thoerem varphi 0} to the equality
$\lim_{n\to\infty}\log n/\varphi(n)=0$, which can be derived from \eqref{varphi n} by the elementary Stolz-Ces\`{a}ro theorem, and the remainder of the proof can be dealt with in the same manner. 

Consider two examples: $\varphi_1(n)=n^s$ where $s>0$ and $\varphi_2(n)=t^n$ where $t>1$. For the level sets
\[E_\beta^{\varphi_i}(a,b)=\left\{(x,y)\in I^2\colon \underline{R}_{\varphi_i}^\beta(x,y)=
a,\overline{R}_{\varphi_i}^\beta(x,y)=b\right\},\]
where $0\leq a\leq b\leq\infty$ and $i=1,2$, by Theorem~\ref{thoerem varphi 0} we obtain immediately that  
\begin{cor}
	Let $\beta>1$. For any $0\leq a\leq b\leq\infty$, we have \[\dim_HE_\beta^{\varphi_1}(a,b)=\dim_HE_\beta^{\varphi_2}(a,b)=2.\]
\end{cor}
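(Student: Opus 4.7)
The plan is to derive this corollary as an immediate application of Proposition~\ref{thoerem varphi 0}, checking only that each of the two explicit choices $\varphi_1(n)=n^s$ (for $s>0$) and $\varphi_2(n)=t^n$ (for $t>1$) satisfies the growth hypothesis~\eqref{varphi n}, namely
\[
\lim_{n\to\infty}n\bigl(\varphi(n+1)-\varphi(n)\bigr)=\infty.
\]
Once this is verified, Proposition~\ref{thoerem varphi 0} applies directly and yields $\dim_H E_\beta^{\varphi_i}(a,b)=2$ for every $0\le a\le b\le\infty$ and every $\beta>1$, for $i=1,2$. I would also recall the equivalent formulation $\lim_{n\to\infty}\log n/\varphi(n)=0$ mentioned in the remark just after the proof of Proposition~\ref{thoerem varphi 0}, since for $\varphi_2$ that form is truly immediate.

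For $\varphi_1(n)=n^s$ I would use a first-order expansion: $(n+1)^s-n^s=n^s\bigl((1+1/n)^s-1\bigr)$, and since $(1+1/n)^s-1\sim s/n$ as $n\to\infty$, we get $n\bigl(\varphi_1(n+1)-\varphi_1(n)\bigr)\sim s\,n^s\to\infty$, valid for every $s>0$ (including $0<s<1$, where $\varphi_1(n+1)-\varphi_1(n)\to 0$ but the extra factor $n$ still forces the product to infinity). For $\varphi_2(n)=t^n$ the computation is essentially a single line: $n\bigl(\varphi_2(n+1)-\varphi_2(n)\bigr)=n(t-1)t^n\to\infty$ since $t>1$. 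In both cases condition~\eqref{varphi n} holds, so Proposition~\ref{thoerem varphi 0} gives the claim.

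I do not expect any substantive obstacle: the corollary is a clean illustration of Proposition~\ref{thoerem varphi 0}, and the two families $n^s$ and $t^n$ are precisely the natural polynomial and exponential speeds for which one wants a ready-made statement. The only care needed is the elementary verification above for small $s$; beyond that the result is a direct invocation of the proposition.
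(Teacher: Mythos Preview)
Your proposal is correct and matches the paper's approach exactly: the corollary is stated there as an immediate consequence of Proposition~\ref{thoerem varphi 0} with no further argument, and your verification that $\varphi_1(n)=n^s$ and $\varphi_2(n)=t^n$ satisfy condition~\eqref{varphi n} is precisely the missing routine check. Nothing more is needed.
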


\subsection*{Acknowledgment}
This work was finished when the author visited the Laboratoire d'Analyse et de Math\'{e}matiques Appliqu\'{e}es, Universit\'{e} Paris-Est Cr\'{e}teil Val de Marne, France. Many thanks for the great help provided by the laboratory and his collaborator.

\end{document}